\documentclass[12pt]{amsart}

\usepackage{amsmath,amssymb,amscd,amsthm,graphicx,enumerate}

\usepackage{hyperref}
\hypersetup{breaklinks=true}

\numberwithin{equation}{section}
\theoremstyle{plain}

\makeatletter
\newtheorem*{rep@theorem}{\rep@title}
\newcommand{\newreptheorem}[2]{%
\newenvironment{rep#1}[1]{%
 \def\rep@title{#2 \ref{##1}}%
 \begin{rep@theorem}}%
 {\end{rep@theorem}}}
\makeatother

\newtheorem{theorem}[equation]{Theorem}
\newreptheorem{theorem}{Theorem}

\newtheorem{proposition}[equation]{Proposition}
\newtheorem{lemma}[equation]{Lemma}
\newtheorem{corollary}[equation]{Corollary}

\newtheorem{claim}[equation]{Claim}

\theoremstyle{remark}
\newtheorem{remark}[equation]{Remark}

\theoremstyle{definition}

\newtheorem*{question*}{Question}

\newtheorem{example}[equation]{Example}
\newcommand{\RR}{\mathbb{R}}

\newcommand{\abs}[1]{\lvert#1\rvert}

\newcommand{\R}{\mathbb R}

\newcommand{\dist}{\operatorname{dist}}

\newcommand{\eps}{\varepsilon}

\providecommand{\abs}[1]{\lvert #1\rvert}

\def\XXint#1#2#3{{\setbox0=\hbox{$#1{#2#3}{\int}$}
     \vcenter{\hbox{$#2#3$}}\kern-.5\wd0}}

\begin{document}

\title[Curve shortening flow]{Lectures on curve shortening flow}
\author{Robert Haslhofer}\thanks{These notes are from the last three weeks of my PDE II courses at the University of Toronto in Spring 2016, 2021 and 2026. I thank all the students for their questions and feedback. My research has been supported by an NSERC discovery grant.}

\begin{abstract}
The curve shortening flow is a geometric heat equation for curves and provides an accessible setting to illustrate many important concepts from nonlinear partial differential equations, including maximum principle estimates, monotonicity formulas, Harnack inequalities and blowup analysis. All these techniques will be combined to give an exposition of Huisken's proof of Grayson's beautiful theorem that the curve shortening flow shrinks any closed embedded curve in the plane to a round point.
\end{abstract}

\maketitle

\tableofcontents

\section{Curve shortening flow basics}

A one-parameter family of embedded curves $\{\Gamma_t\subset \R^{2}\}_{t\in I}$ moves by curve shortening flow if the normal velocity at each point is given by the curvature vector, namely
\begin{equation}\label{eq_mcf}
\partial_t p  = \vec{\kappa}(p)
\end{equation}
for all $p\in \Gamma_t$ and all $t\in I$. Here, $I$ is an interval, $\partial_t p$ is the normal velocity at $p$, and $\vec{\kappa}(p)$ is the curvature vector at $p$.

\begin{example}[Round shrinking circles] If $\Gamma_t=\partial B^{2}_{r(t)}\subset\R^{2}$, then \eqref{eq_mcf} reduces to an ODE for the radius, namely
$\dot{r}=-1/r$.
The solution with $r(0)=R$ is given by $r(t)=\sqrt{R^2-2t}$, $t\in (-\infty,R^2/2)$.
\end{example}

\begin{example}[Grim reaper]
Another explicit solution is given by $\Gamma_t=\textrm{graph}(-\log \cos p)+t$ where $p\in(-\tfrac{\pi}{2},\tfrac{\pi}{2})$ and $t\in(-\infty,\infty)$.
\end{example}

\begin{example}[Paperclip and hairclip]
Despite the nonlinear nature of the curve shortening flow, the upwards translating grim reaper given by $e^{-y(t)}=e^{-t}\cos x(t)$ and the downwards translating grim reaper given by $e^{y(t)}=e^{-t}\cos x(t)$ can be combined to give another pair of solutions given implicitly as the solution set of
\begin{equation}\label{eq_paperclip}
\cosh y(t)=e^{-t}\cos x(t),
\end{equation}
respectively
\begin{equation}\label{eq_hairclip}
\sinh y(t)=e^{-t}\cos x(t).
\end{equation}
The paperclip, given as solution of \eqref{eq_paperclip} restricted to $\abs{x}< \pi/2$, describes a compact ancient solution that for $t\to 0$ becomes extinct in a round point and for $t\to -\infty$ looks like two copies of the grim reaper glued together. The hairclip \eqref{eq_hairclip} is an eternal solution, which for $t\to -\infty$ looks like an infinite row of alternatingly translating up and down grim reapers, and for $t\to +\infty$ converges to a horizontal line.
\end{example}

From now on we will focus on the evolution of closed embedded curves. It is often most convenient to describe the evolution \eqref{eq_mcf} in terms of a one-parameter family of embeddings
\begin{equation}
\gamma=\gamma(\cdot,t): S^1 \times I \rightarrow \RR^{2}
\end{equation}
with $\Gamma_t=\gamma(S^1,t)$. Setting $p=\gamma(x,t)$, the curve shortening flow equation then takes the form
\begin{equation}\label{eq_mcf2}
\partial_t \gamma(x,t) =  \kappa(x,t)N(x,t),
\end{equation}
where we have expressed the curvature vector $\vec{\kappa}$ as a product of the curvature $\kappa$ and the inward pointing unit normal vector $N$.

\begin{remark} The evolution can also be written in the form
\begin{equation}
\partial_t \gamma = \partial^2_s\gamma,
\end{equation}
where $s$ denotes arclength. Even though this almost looks like the linear heat equation, the curve shortening flow is of course a nonlinear PDE, since the arclength $s$ depends in a nonlinear way on $x$ and $t$.
\end{remark}

We will for now assume that we have a smooth solution of \eqref{eq_mcf2} on a time interval $[0,T)$ and derive various elementary consequences.\footnote{The assumption that the initial curve is smooth is not really necessary. As a consequence of the estimates we will discuss any $C^2$-curve becomes instantaneously smooth under the curve shortening flow. In fact, the Cauchy problem for the curve shortening flow is well posed starting with any finite length Jordan curve \cite{Lauer}.}

Let $A(t)$ be the area of the domain enclosed by $\Gamma_t$. Then
\begin{equation}
\frac{d}{dt}A(t)=-\int_{\Gamma_t} \kappa \, ds = -2\pi, 
\end{equation}
and thus $A(t)=A(0)-2\pi t$. In particular, $T\leq A(0)/2\pi$. We will see in the final section that the evolution actually can be continued smoothly until time $A(0)/2\pi$, where the flow becomes extinct in a round point (this is a big theorem originally due to Grayson \cite{Grayson}, combined with earlier work in the convex case by Gage-Hamilton \cite{Gage_Hamilton}).

A geometric incarnation of the maximum principle is the following comparison principle: If $\{\Gamma_t\}_{t\in [t_0,t_1]}$ and $\{\Gamma_t'\}_{t\in [t_0,t_1]}$ are two curve shortening flows (say at least one of them compact) that are initially disjoint, then they stay disjoint under the evolution, i.e.
\begin{equation}
\Gamma_{t_0}\cap\Gamma'_{t_0}=\emptyset \qquad \Rightarrow \qquad \Gamma_{t}\cap\Gamma'_{t}=\emptyset \quad \forall t\geq t_0.
\end{equation}
In fact, it is easy to check that $t\mapsto\dist(\Gamma_t,\Gamma'_t)$ is nondecreasing in time. In particular, any closed embedded curve contained in $B_1$, even a curve spiralling around millions of times, will unwind itself and become round in time $T\leq \frac{1}{2}$. This vividly illustrates the strength of Grayson's theorem and the power of geometric heat equations in general.

We will next compute how the total length $L(t)=\int_{\Gamma_t}ds$ evolves under curve shortening flow. To this end, we start by computing
\begin{align}
\partial_t \langle\partial_x\gamma, \partial_x\gamma\rangle^{1/2} = \langle\partial_x\partial_t\gamma,T\rangle =\langle\partial_x(\kappa N),T\rangle,
\end{align}
where $T=\partial_x \gamma/ \abs{\partial_x \gamma}$ denotes the unit tangent and where we used the evolution equation \eqref{eq_mcf2} in the last step. Since $\langle N,T\rangle =0$ and since $\langle\partial_x N,T\rangle= -\kappa \frac{ds}{dx}$ by the definition of curvature, we obtain that
\begin{equation}\label{eq_gradientflow}
\frac{d}{dt}L(t)=\frac{d}{dt}\int_{S^1} \langle\partial_x\gamma, \partial_x\gamma\rangle^{1/2}\, dx =-\int_{\Gamma_t} \kappa^2 ds.
\end{equation}
Equation \eqref{eq_gradientflow} has the interpretation that the curve shortening flow is the gradient flow of the length functional, i.e. indeed deserves its name since it shortens curves as efficiently as possible (in general the variation of the length functional with normal speed $v$ is given by $-\int_\Gamma \kappa v ds$).

\begin{proposition}[Evolution of curvature]\label{prop_evol_eq}
If $\{\Gamma_t\subset \R^{2}\}$ evolves by curve shortening flow, then its curvature evolves by
\begin{equation}\label{eq_evol_curv}
\kappa_t=\kappa_{ss}+\kappa^3.
\end{equation}
\end{proposition}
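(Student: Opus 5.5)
We work in the parametrization \eqref{eq_mcf2} and introduce the speed $v=\abs{\partial_x\gamma}$, so that $\partial_s=v^{-1}\partial_x$. We use the Frenet relations in the paper's sign convention: $\partial_s T=\kappa N$ and $\partial_s N=-\kappa T$, the latter consistent with $\langle\partial_x N,T\rangle=-\kappa\frac{ds}{dx}$ used above. The strategy has three steps: compute how $v$ evolves, which gives the commutator $[\partial_t,\partial_s]$; compute how $T$ evolves; and then differentiate the identity $\partial_s T=\kappa N$ in time.

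\emph{Step 1: evolution of $v$.} The computation preceding \eqref{eq_gradientflow} already gives
\begin{equation*}
\partial_t v=\langle\partial_x(\kappa N),T\rangle=\kappa\langle\partial_x N,T\rangle=-\kappa^2 v.
\end{equation*}
From this we obtain the commutator
\begin{equation*}
\partial_t\partial_s=\partial_t(v^{-1}\partial_x)=\partial_s\partial_t+\kappa^2\partial_s .
\end{equation*}

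\emph{Step 2: evolution of $T$ and $N$.} Applying the commutator to $T=\partial_s\gamma$ gives
\begin{equation*}
\partial_t T=\partial_s(\kappa N)+\kappa^2 T=\kappa_s N-\kappa^2T+\kappa^2T=\kappa_s N .
\end{equation*}
Since $N$ is obtained by rotating $T$ by a fixed angle, it follows that $\partial_t N=-\kappa_s T$.

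\emph{Step 3: differentiate the curvature.} We have $\kappa=\langle\partial_s T,N\rangle$. Differentiating in time and using Step 1,
\begin{equation*}
\partial_t\kappa=\langle\partial_t\partial_s T,N\rangle+\langle\partial_s T,\partial_t N\rangle=\langle\partial_s(\kappa_s N)+\kappa^2\kappa N,N\rangle+\langle\kappa N,-\kappa_s T\rangle .
\end{equation*}
The last term vanishes because $\langle N,T\rangle=0$. In the first term, $\partial_s(\kappa_sN)=\kappa_{ss}N-\kappa\kappa_sT$, whose normal component is $\kappa_{ss}$. Collecting terms yields $\kappa_t=\kappa_{ss}+\kappa^3$, which is \eqref{eq_evol_curv}.

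The only delicate point is bookkeeping. The time derivative must be taken at fixed $x$, not at fixed $s$, and this is exactly what produces the $\kappa^2\partial_s$ commutator term. One must also keep the sign of $\kappa$ consistent with the inward normal. As a sanity check, the shrinking circle has $\kappa=1/r$ and $\dot r=-1/r$, so $\dot\kappa=\kappa^3$, as the formula predicts.
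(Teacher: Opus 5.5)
Your proof is correct, but it is organized differently from the paper's.

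\emph{Your route.} You work in an arbitrary parametrization. You first derive the general identities $\partial_t v=-\kappa^2 v$, $[\partial_t,\partial_s]=\kappa^2\partial_s$, $\partial_t T=\kappa_s N$ and $\partial_t N=-\kappa_s T$. You then differentiate $\kappa=\langle\partial_s T,N\rangle$, and the $\kappa^3$ comes out in one piece from the commutator term $\kappa^2\partial_s T$.

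\emph{The paper's route.} The paper avoids the commutator altogether. It makes a time-independent reparametrization, which preserves $\partial_t\gamma=\kappa N$, so that $\abs{\partial_x\gamma}=1$ and $\langle\partial_x^2\gamma,T\rangle=0$ at the single point under consideration. It then differentiates $\kappa=\abs{\partial_x\gamma}^{-2}\langle\partial_x^2\gamma,N\rangle$ there. The $\kappa^3$ is assembled from two terms:
\begin{itemize}
\item the derivative of the factor $\abs{\partial_x\gamma}^{-2}$, which is the pointwise counterpart of your commutator, contributes $2\kappa^3$;
\item the term $\kappa\langle\partial_x^2N,N\rangle=-\kappa\abs{\partial_x N}^2$ contributes $-\kappa^3$.
\end{itemize}
The second normalization does two jobs: it kills the term $\langle\partial_x^2\gamma,\partial_t N\rangle$, and it makes $\partial_x^2\kappa=\kappa_{ss}$ at that point.

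\emph{Comparison.} The paper's route is shorter for this one identity. Yours is gauge-free, and it yields as by-products facts the paper needs later. The commutator $[\partial_t,\partial_s]=\kappa^2\partial_s$ is invoked without proof in the derivative estimates and in the Harnack inequality. The frame evolution is what lies behind the claim $V_t\equiv 0$ in the classification of translators.

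A minor point: in Step 3 you use Step 2 as well as Step 1, not only Step 1.
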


\begin{proof}
For convenience we work with a parametrization that satisfies $\abs{\partial_x \gamma}=1$ and $\langle\partial_x^2 \gamma, T\rangle=0$ at the point $(x,t)$ under consideration. Since $\kappa=\abs{\partial_x \gamma}^{-2}\langle \partial^2_x \gamma, N\rangle$, at the point $(x,t)$ we can compute
\begin{align}
\kappa_t &= \partial_t\langle \partial^2_x \gamma,N\rangle-2\langle T,\partial_x\partial_t\gamma\rangle \langle \partial^2_x \gamma,N\rangle\nonumber\\
 &= \langle \partial_x^2\partial_t \gamma,N\rangle-2\kappa\langle T,\partial_x\partial_t\gamma\rangle,
\end{align}
where we used that $\partial_tN$ is tangential and $\partial_x^2 \gamma$ is normal. Remembering our evolution equation \eqref{eq_mcf2} and using $\langle \partial_x N,N\rangle = 0$ we infer that
\begin{align}
\partial_t \kappa &= \partial_x^2\kappa + \kappa \langle \partial_x^2 N ,N\rangle-2\kappa^2\langle T,\partial_x N\rangle\nonumber\\
 &= \partial_x^2\kappa - \kappa \langle \partial_x N , \partial_x N\rangle+2\kappa^3.
\end{align}
Noting that $\partial_x^2 \kappa = \kappa_{ss}$ and $\langle \partial_x N , \partial_x N\rangle=\kappa^2$ at the point $(x,t)$, this proves the proposition.
\end{proof}

Using Proposition \ref{prop_evol_eq} and the maximum principle we obtain:

\begin{corollary}[Convexity]
Convexity is preserved under curve shortening flow, i.e. if $\kappa>0$ at $t=0$ then $\kappa>0$ for all $t\in[0,T)$.
\end{corollary}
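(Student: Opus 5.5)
We apply the maximum principle to the evolution equation \eqref{eq_evol_curv} from Proposition \ref{prop_evol_eq}. The curve is closed, so $S^1$ is compact and there is no boundary to worry about. Set
\[
\kappa_{\min}(t)=\min_{\Gamma_t}\kappa .
\]
At $t=0$ we have $\kappa_{\min}(0)=\delta>0$. We want to show that $\kappa_{\min}(t)>0$ for all $t\in[0,T)$. In fact we will prove the quantitative bound $\kappa_{\min}(t)\geq\delta$, which holds because the reaction term $\kappa^3$ is positive wherever $\kappa>0$.

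\textbf{Step 1 (minimum point).} Fix a time $t$ and a point $x_0$ where $\kappa(\cdot,t)$ attains its minimum. At that point $\kappa_s=0$ and $\kappa_{ss}\geq 0$. For the time derivative we will use the parametrization-independent form of $\partial_t\kappa$, meaning the derivative at a fixed parameter $x$. Since $\kappa$ is a geometric scalar and the flow is normal, this is exactly what \eqref{eq_evol_curv} computes.

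\textbf{Step 2 (barrier argument).} This step avoids the degenerate case of equality at the first touching time. Fix $\eps\in(0,\delta)$ and suppose the claim $\kappa>\delta-\eps$ fails somewhere on $[0,T)$. Let $t_1>0$ be the first time at which $\kappa(x_0,t_1)=\delta-\eps$. This first time exists and is positive by compactness of $S^1\times[0,t]$ and continuity. At $(x_0,t_1)$ we have $\partial_t\kappa\leq 0$, because $\kappa$ has just decreased to this value. On the other hand, Step 1 and \eqref{eq_evol_curv} give
\[
\partial_t\kappa=\kappa_{ss}+\kappa^3\geq(\delta-\eps)^3>0,
\]
which is a contradiction. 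Letting $\eps\to0$ gives $\kappa\geq\delta>0$ on $[0,T)$.

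One could alternatively compare $\kappa_{\min}$ with the ODE solution $\dot\phi=\phi^3$, $\phi(0)=\delta$, to get the stronger bound $\kappa_{\min}(t)\geq\delta/\sqrt{1-2\delta^2t}$. This stronger bound is not needed here.

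\textbf{Main obstacle.} The only delicate point is justifying that $\partial_t\kappa\leq 0$ at a first touching time under a time-dependent parametrization. This is fine because \eqref{eq_evol_curv} holds for the normal parametrization \eqref{eq_mcf2}: $x$ is fixed while $t$ varies, so $\kappa(x_0,\cdot)$ is an honest function of time on $[0,t_1]$. The rest is the standard scalar maximum principle, and the strict positivity of $\kappa^3$ makes the argument clean without any perturbation term beyond the $\eps$ above.
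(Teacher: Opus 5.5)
Your argument is correct and is the same maximum-principle argument the paper invokes (it only cites Proposition \ref{prop_evol_eq} and the maximum principle); you write it out carefully as a first-touching-time contradiction, using $\kappa_{ss}\geq 0$ and $\kappa^3>0$ at a spatial minimum. As a side remark, your ODE comparison bound $\delta/\sqrt{1-2\delta^2 t}$ is the right one: the paper's displayed bound $\kappa_{\min}(0)/(1-2t\kappa_{\min}^2(0))$ is missing a square root and already fails for shrinking round circles.
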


More precisely, if $\kappa_{\min}(t):=\min_{\Gamma_t}\kappa$ is positive at $t=0$, then it is nondecreasing in time and satisfies
\begin{equation}
\kappa_{\min}(t)\geq \frac{\kappa_{\min}(0)}{1-2t\kappa_{\min}^2(0)}.
\end{equation}
In particular, this gives the (non sharp) estimate $T\leq 1/(2\kappa^2_{\min}(0))$.

We finish this first lecture by proving that the supremum of the curvature controls all the higher derivatives:

\begin{theorem}[Derivative estimates]\label{thm_der_est}
There exist constants $C_\ell=C_\ell(K,T)<\infty$ such that if $\{\Gamma_t\}_{t\in [0,T)}$ is a solution of the curve shortening flow with $\sup_{t\in [0,T)}\sup_{\Gamma_t}\abs{\kappa}\leq K$, then
\begin{equation}
\sup_{\Gamma_t} \abs{\partial^\ell_s \kappa}\leq \frac{C_\ell}{t^{\ell/2}}.
\end{equation}
\end{theorem}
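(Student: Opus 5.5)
We argue by induction on $\ell$, using Bernstein-type maximum principle arguments applied to suitable combinations of $\kappa$ and its derivatives. The case $\ell=0$ is the hypothesis. The first ingredient is the commutator identity: if $v$ denotes the normal speed, then $\partial_t\partial_s=\partial_s\partial_t+\kappa v\,\partial_s$ with $v=\kappa$. This follows from $\partial_t\abs{\partial_x\gamma}=-\kappa^2\abs{\partial_x\gamma}$, a pointwise version of the computation leading to \eqref{eq_gradientflow}. Combining it with Proposition \ref{prop_evol_eq} and inducting on $\ell$ gives
\begin{equation*}
\partial_t \kappa_{s^\ell} = \kappa_{s^{\ell+2}} + \sum_{i+j+k=\ell} c_{ijk}\, \kappa_{s^i}\kappa_{s^j}\kappa_{s^k},
\end{equation*}
with universal constants $c_{ijk}$. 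For instance, $\partial_t\kappa_s=\kappa_{sss}+4\kappa^2\kappa_s$.

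For $\ell=1$ consider $f=t\kappa_s^2+A\kappa^2$ with a constant $A$ to be chosen. From the evolution equations we get
\begin{equation*}
\partial_t(\kappa^2)\le(\kappa^2)_{ss}-2\kappa_s^2+2K^4
\end{equation*}
and
\begin{equation*}
\partial_t(\kappa_s^2)\le (\kappa_s^2)_{ss}-2\kappa_{ss}^2+8K^2\kappa_s^2.
\end{equation*}
Hence
\begin{equation*}
\partial_t f\le f_{ss}+\kappa_s^2\bigl(1+8K^2T-2A\bigr)+2AK^4 .
\end{equation*}
Choosing $A=1+4K^2T$ makes the middle term nonpositive. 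The maximum principle then gives $f\le AK^2+2AK^4T$, and therefore $\kappa_s^2\le C_1^2/t$.

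For general $\ell$, assume the bounds for all orders below $\ell$. Consider
\begin{equation*}
f=t^\ell\kappa_{s^\ell}^2+\sum_{m<\ell} A_m\, t^{m}\kappa_{s^m}^2 ,
\end{equation*}
or, more simply, $f=t^{\ell}\kappa_{s^\ell}^2+A\,t^{\ell-1}\kappa_{s^{\ell-1}}^2$. Each term $t^m\kappa_{s^m}^2$ has evolution
\begin{equation*}
\partial_t(t^m\kappa_{s^m}^2)\le \partial_s^2(\cdot)-2t^m\kappa_{s^{m+1}}^2+m t^{m-1}\kappa_{s^m}^2+2t^m\abs{\kappa_{s^m}}\,\Bigl|\sum c_{ijk}\kappa_{s^i}\kappa_{s^j}\kappa_{s^k}\Bigr|.
\end{equation*}
The cubic terms are estimated using the induction hypothesis $\abs{\kappa_{s^i}}\le C_i t^{-i/2}$. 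The only terms involving the top derivative are those with one index equal to $\ell$, of the form $\kappa^2\kappa_{s^\ell}$; these contribute at most $CK^2t^\ell\kappa_{s^\ell}^2\le C t^{\ell-1}\kappa_{s^\ell}^2$ for $t\le T$. All remaining terms are bounded by $C t^{\ell}t^{-\ell/2}t^{-\ell/2}t^{-1}\cdot t$, up to Young's inequality. The bad term $\ell t^{\ell-1}\kappa_{s^\ell}^2$ is absorbed by the good term $-2A\,t^{\ell-1}\kappa_{s^\ell}^2$ coming from the lower-order summand once $A$ is large, exactly as in the case $\ell=1$. We obtain $\partial_t f\le f_{ss}+C$, and since $f(0)$ is bounded (it is $0$ if $\ell\ge2$), we get $f\le C(1+T)$. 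This yields the claim.

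The main obstacle is the bookkeeping of the time weights. The $t^{m}$ factor on the lower summand must be exactly right, so that the good gradient term $-2A t^{\ell-1}\kappa_{s^\ell}^2$ has the same power of $t$ as the bad term $\ell t^{\ell-1}\kappa_{s^\ell}^2$. Meanwhile the reaction terms in the lower summand, such as $A t^{\ell-1}\kappa_{s^{\ell-1}}\kappa^2\kappa_{s^{\ell-1}}$, must be bounded by the induction hypothesis. The derivation of the commutator and of the structure of the nonlinear terms is routine, and compactness of $\Gamma_t$ makes the maximum principle applicable.
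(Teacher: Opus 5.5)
Your $\ell=1$ step is the paper's argument (your $A=1+4K^2T$ meets the paper's condition $\beta\ge(8TK^2+1)/2$). Your first candidate $t^\ell\kappa_{s^\ell}^2+\sum_{m<\ell}A_mt^m\kappa_{s^m}^2$ is the paper's $F_\ell$. The gap is in the ``more simply'' two-term function $f=t^\ell\kappa_{s^\ell}^2+A\,t^{\ell-1}\kappa_{s^{\ell-1}}^2$, which is the one your bookkeeping actually treats (cf.\ your remark that $f(0)=0$ for $\ell\ge2$). For $\ell\ge2$, differentiating the weight $t^{\ell-1}$ of the lower summand produces the term $(\ell-1)A\,t^{\ell-2}\kappa_{s^{\ell-1}}^2$. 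This is not a reaction term. The induction hypothesis only gives $(\ell-1)A\,t^{\ell-2}\kappa_{s^{\ell-1}}^2\le(\ell-1)AC_{\ell-1}^2\,t^{-1}$, which is not integrable at $t=0$. Nothing in the two-term $f$ supplies a good term of the form $-t^{\ell-2}\kappa_{s^{\ell-1}}^2$ to absorb it. So you only get $\partial_tf\le f_{ss}+C+C't^{-1}$, and the claimed inequality $\partial_tf\le f_{ss}+C$ fails. Running the maximum principle from a time $t_0>0$ then gives $\max f(t)\le\max f(t_0)+C(t-t_0)+C'\log(t/t_0)$. This either blows up as $t_0\to0$ or depends on the solution at time $t_0$ rather than only on $K,T$. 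For $\ell=1$ the problem is invisible because the lower weight is $t^0$. The same obstruction occurs at every level: each summand $A_mt^m\kappa_{s^m}^2$ with $m\ge1$ produces $mA_mt^{m-1}\kappa_{s^m}^2$, which the induction hypothesis controls only by $O(t^{-1})$.

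The repair is to keep the full chain down to $m=0$, as in the paper's $F_\ell$, and choose the constants from the top down. First choose $A_{\ell-1}$ so that $-2A_{\ell-1}t^{\ell-1}\kappa_{s^\ell}^2$ absorbs $(\ell+1+cK^2T)\,t^{\ell-1}\kappa_{s^\ell}^2$. This collects the weight term, the $\kappa^2\kappa_{s^\ell}$ term, and the Young term from $2Ct^{\ell/2}\abs{\kappa_{s^\ell}}\le t^{\ell-1}\kappa_{s^\ell}^2+C^2t$. (That Young step is what your garbled power count must mean; you cannot insert the not-yet-proven bound on $\kappa_{s^\ell}$ there.) Then choose successively $A_{m-1}\ge mA_m/2$ for $m=\ell-1,\dots,1$, so that $-2A_{m-1}t^{m-1}\kappa_{s^m}^2$ absorbs $mA_mt^{m-1}\kappa_{s^m}^2$. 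The cascade terminates at $A_0\kappa^2$, whose weight has no time derivative and whose reaction term is at most $2A_0K^4$. The reaction terms of the summands with $1\le m<\ell$ are bounded by the induction hypothesis, and $F_\ell(0)\le A_0K^2$. Alternatively, you can rescue a two-term function by localizing in time. To estimate at time $t_1$, run your $\ell=1$ argument one level up on $[t_1/2,t_1]$ with $(t-t_1/2)\kappa_{s^\ell}^2+B\kappa_{s^{\ell-1}}^2$. There the induction hypothesis gives uniform bounds $\abs{\kappa_{s^m}}\le Ct_1^{-m/2}$ for $m<\ell$. The weight now has derivative $1$ and produces only $\kappa_{s^\ell}^2$, which $-2B\kappa_{s^\ell}^2$ absorbs, and the resulting bound on $\kappa_{s^\ell}^2(t_1)$ is $C(K,T)\,t_1^{-\ell}$.
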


\begin{proof}[Proof (sketch)] Using Proposition \ref{prop_evol_eq} we compute
\begin{equation}\label{eq_kappasq}
(\partial_t-\partial^2_s)\kappa^2=-2\kappa_s^2+2\kappa(\partial_t-\partial^2_s)\kappa = -2\kappa_s^2+2\kappa^4.
\end{equation}
Next, differentiating \eqref{eq_evol_curv} with respect to arc length we obtain
\begin{align}
(\kappa_t)_s = \kappa_{sss}+3\kappa^2\kappa_s,
\end{align}
which together with the commutator identity $(\kappa_s)_t=(\kappa_t)_s+\kappa^2\kappa_s$ implies
\begin{equation}
(\partial_t-\partial_s^2)\kappa_s=4\kappa^2\kappa_s,
\end{equation}
and thus
\begin{equation}\label{kappa_ssq}
(\partial_t-\partial_s^2)\kappa_s^2=-2\kappa_{ss}^2+8\kappa^2\kappa_s^2.
\end{equation}
Combining the evolution equations \eqref{eq_kappasq} and \eqref{kappa_ssq} we obtain
\begin{equation}
(\partial_t-\partial^2_s)(t\kappa_s^2+\beta\kappa^2)\leq (8t \kappa^2+1-2\beta)\kappa_s^2+2\beta\kappa^4\leq 2\beta K^4,
\end{equation}
provided we choose $\beta\geq (8TK^2+1)/2$. Thus, the maximum principle implies that
\begin{equation}
t\kappa_s^2 \leq \beta K^2 +2\beta K^4 T,
\end{equation}
which proves the derivative estimate for $\ell=1$.

For general $\ell$, by induction one obtains the differential inequality
\begin{equation}
(\partial_t-\partial_s^2)\abs{\partial_s^\ell\kappa}^2\leq-2\abs{\partial_s^{\ell+1}\kappa}^2+\alpha_\ell \left(\sum_{i+j+k=\ell}\abs{\partial_s^i \kappa}\abs{\partial_s^j \kappa}\abs{\partial_s^k \kappa}\right) \abs{\partial_s^\ell \kappa},
\end{equation}
where $\alpha_\ell$ are some numerical constants. The derivative estimates then follow by considering the evolution of
\begin{equation}
F_\ell= t^\ell \abs{\partial_s^\ell \kappa}^2+\sum_{i=0}^{\ell-1} \beta_{\ell,i}t^i \abs{\partial_s^i \kappa}^2
\end{equation}
for suitable constants $\beta_{\ell,i}$ and arguing by induction. The details are left as an exercise.
\end{proof}

\section{Existence and uniqueness}

The goal of this lecture is to explain how to prove existence and uniqueness for the curve shortening flow.

\begin{theorem}[Existence and Uniqueness]\label{thm_ex_uniq}
Let $\gamma_0:S^1\to\mathbb{R}^2$ be an embedded curve. Then there exists a unique smooth solution $\gamma: S^1\times [0,T)\to \mathbb{R}^2$ of the curve shortening flow
\begin{equation}
\partial_t\gamma=\partial_s^2\gamma,\qquad\quad \gamma|_{t=0}=\gamma_0,
\end{equation}
defined on a maximal time interval [0,T). The maximal existence time is characterized by curvature blowup, namely
\begin{equation}
\sup_{S^1\times [0,T)}\abs{\kappa}(x,t)=\infty.
\end{equation}
\end{theorem}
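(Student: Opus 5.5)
We handle the degeneracy of the parametric equation $\partial_t\gamma=\partial_s^2\gamma$ (it is invariant under reparametrizations, so it is only weakly parabolic) by writing the evolving curve as a normal graph over a fixed smooth reference curve. This reduces the problem to a scalar quasilinear parabolic equation. Concretely, we approximate $\gamma_0$ by a smooth curve $\bar\gamma$ with unit normal $\bar N$. Then we seek
\[
\gamma(x,t)=\bar\gamma(x)+u(x,t)\bar N(x)
\]
with $u$ small in $C^1$, which is possible within a tubular neighborhood since $\gamma_0$ is embedded. Requiring that the normal velocity equal the curvature, i.e.\ $\langle\partial_t\gamma,N\rangle=\kappa$, gives an equation of the form
\[
u_t=a(x,u,u_x)\,u_{xx}+b(x,u,u_x),\qquad a>0 \text{ for } |u|,|u_x| \text{ small}.
\]
This is a strictly parabolic quasilinear equation on the circle, with $a$ computed from $1/|\partial_x\gamma|^2$ up to the geometric factor. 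Short time existence then follows from standard theory. We linearize at the initial datum, solve the linear problem in parabolic H\"older spaces $C^{2+\alpha,1+\alpha/2}$ via Schauder estimates, and apply the inverse function theorem or a contraction mapping on a short time interval. Smoothness for $t>0$ comes from bootstrapping with Schauder estimates, or equivalently from Theorem \ref{thm_der_est}. Afterwards we reparametrize by solving an ODE for the tangential diffeomorphism, which turns the normal-graph solution into a solution of $\partial_t\gamma=\kappa N$. Embeddedness is preserved for short time by continuity, and for all time by the comparison principle applied to distinct arcs.

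For uniqueness, we take two smooth solutions with the same initial curve. On a short interval, both can be written as normal graphs $u_1,u_2$ over the same reference curve, after a tangential reparametrization, which does not change the geometric flow. The difference $w=u_1-u_2$ then satisfies a linear parabolic equation $w_t=\tilde a w_{xx}+\tilde b w_x+\tilde c w$ with bounded coefficients. The maximum principle, or an energy estimate for $e^{-Ct}\int w^2$, gives $w\equiv 0$. A continuation argument then shows that the set of times where the two solutions agree is open and closed, so there is a unique maximal solution on $[0,T)$.

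For the blowup characterization, we suppose $T<\infty$ and $\sup|\kappa|\le K$ on $[0,T)$, and derive a contradiction. Theorem \ref{thm_der_est} gives uniform bounds on all $\partial_s^\ell\kappa$ on $[T/2,T)$. The speed $|\partial_t\gamma|=|\kappa|\le K$ is bounded, so $\gamma(\cdot,t)$ converges uniformly as $t\to T$. To get smooth convergence of the parametrization, we also control $\log|\partial_x\gamma|$: its time derivative is $-\kappa^2$, which is bounded, so the metric stays uniformly equivalent. Higher $x$-derivatives of $\gamma$ are controlled by expressing $\partial_x$ in terms of $\partial_s$ and the bounded derivatives of $|\partial_x\gamma|$, whose evolution involves $\kappa$ and its $s$-derivatives. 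Hence $\gamma(\cdot,t)\to\gamma_T$ smoothly. The limit is still an immersion, and it is embedded: embeddedness can only fail by two distinct arcs touching, which the comparison principle excludes, since the distance between disjoint arcs is nondecreasing and local graphicality holds by the curvature bound. Restarting the flow from $\gamma_T$ with the short time existence result and gluing, using uniqueness, extends the solution past $T$, contradicting maximality. The main obstacle is the existence step: deriving the graph equation cleanly and carrying out the Schauder and fixed point argument. I would treat this by citing standard quasilinear parabolic theory rather than redoing it.
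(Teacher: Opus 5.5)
Your proposal is correct and follows essentially the same route as the paper. You fix the gauge by writing the flow as a normal graph over a (smoothed) reference curve and get short-time existence from standard quasilinear parabolic theory. You remove the tangential component by solving an ODE for the reparametrization, and you rule out a bounded-curvature maximal time via the derivative estimates of Theorem \ref{thm_der_est} and a restart. The differences are elaborations within the same argument, not a different approach: you use Schauder/H\"older theory instead of the paper's Picard iteration with $H^j$ energy estimates, and you treat explicitly two points the paper only asserts, namely uniqueness (via the linear equation for $u_1-u_2$) and smooth convergence of the parametrization (via $\partial_t\log\abs{\partial_x\gamma}=-\kappa^2$).
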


We start by explaining that the curve shortening flow is not strictly parabolic. If $\gamma:S^1\times [0,T)\to \mathbb{R}^2$ evolves by curve shortening flow then
\begin{align}
\partial_t\gamma&=\partial^2_s\gamma = \abs{\partial_x\gamma}^{-1}\partial_x(\abs{\partial_x\gamma}^{-1}\partial_x\gamma)\\
&= \abs{\partial_x\gamma}^{-2}\left(\partial_x^2\gamma-\left\langle \frac{\partial_x \gamma}{\abs{\partial_x\gamma}},\partial_x^2\gamma\right\rangle\frac{\partial_x \gamma}{\abs{\partial_x\gamma}}\right).
\end{align}
In components $\gamma=(\gamma^1,\gamma^2)$ this reads
\begin{equation}
 \left(\begin{array}{c}
\partial_t\gamma^1 \\
\partial_t\gamma^2
\end{array}\right)=
\abs{\partial_x\gamma}^{-4}\left(\begin{array}{cc}
(\partial_x\gamma^2)^2 & \partial_x\gamma^1 \partial_x \gamma^2\\
\partial_x\gamma^1 \partial_x \gamma^2 & (\partial_x\gamma^1)^2
\end{array}\right)
\left(\begin{array}{c}
\partial_x^2\gamma^1 \\
\partial_x^2\gamma^2
\end{array}\right).
\end{equation}
Note that the matrix is positive semidefinite, but not positive definite (it has positive trace, but vanishing determinant). Thus, the standard theory for strictly parabolic systems cannot be applied.

Degeneracies like above are typical for geometric PDEs. For the curve shortening flow, the underlying reason is that geometrically only the normal component of the velocity is meaningful, i.e. the velocity is only determined up to tangential motion / change of parametrization. To overcome this degeneracy we have to fix a gauge. To this end, we represent the flow as evolving graph over $\gamma_0$ (if $\gamma_0$ is not smooth, one can instead choose a smooth curve nearby), i.e. we make the ansatz
\begin{equation}
\tilde{\gamma}(x,t)=\gamma_0(x)+u(x,t)N(x).
\end{equation} 
Assuming that $\gamma_0$ is parametrized by arc length, we have
\begin{equation}
\tilde{\gamma}'=u'N+(1-ku)T,
\end{equation}
where $k$ is the curvature of $\gamma_0$, and $T$ and $N$ are the unit tangent and unit normal of $\gamma_0$, respectively. Differentiating again we obtain
\begin{equation}
\tilde{\gamma}''=(u''+k(1-ku))N-(k'u+2ku')T.
\end{equation}
Using the formula $\kappa=\abs{\tilde{\gamma}'}^{-3}\det(\tilde{\gamma}',\tilde{\gamma}'')$ we obtain the curvature of $\tilde{\gamma}$,
\begin{equation}
\kappa=\frac{(1-ku)u''+2ku'^2+k'uu'-2k^2u+k^3u^2+k}{((1-ku)^2+u'^2)^{3/2}}.
\end{equation}
Next, the unit normal vector $\tilde{N}$ of $\tilde{\gamma}$ is obtained by rotating $\tilde{\gamma}'/\abs{\tilde{\gamma}'}$ by $\pi/2$ and thus equal to
\begin{equation}
\tilde{N}=\frac{(1-ku)N-u'T}{((1-ku)^2+u'^2)^{1/2}}.
\end{equation}
Finally, observing that $\Gamma_t=\tilde{\gamma}(S^1,t)$ moves by curve shortening if and only if $\langle \tilde{N}, u' N\rangle =\kappa$, we obtain the evolution equation
\begin{equation}\label{eq_quasilin}
u_t = \frac{u''+(1-ku)^{-1}(2ku'^2+k'uu'-2k^2u+k^3u^2+k)}{((1-ku)^2+u'^2)}.
\end{equation}
Equation \eqref{eq_quasilin} is a quasilinear strictly parabolic equation (as long as say $\abs{ku}\leq 1/2$), and thus has a unique solution on some time interval $[0,\eps)$, c.f. the previous PDE lectures (this can be done e.g by using Picard iteration combined with the theory for the inhomogenous linear heat equation and energy estimates in $H^j$ for $j$ sufficiently large).

In general, $\tilde{\gamma}$ only solves the equation up to tangential motion, i.e.
\begin{equation}
\partial_t\tilde{\gamma}=\kappa\tilde{N}+f\partial_x\tilde{\gamma},
\end{equation}
for some function $f=f(x,t)$. To get a parametrization $\gamma(x,t)$ that literally solves the equation $\partial_t\gamma=\partial_s^2\gamma$, we have to set
\begin{equation}
\gamma(x,t)=\tilde{\gamma}(\varphi_t(x),t),
\end{equation}
where $\varphi_t:S^1\to S^1$ is the unique solution of the family of ODEs
\begin{equation}
\tfrac{d}{dt} \varphi_t(x)=-f(x,t) \frac{\partial_x\tilde{\gamma}(x,t)}{\partial_x\tilde{\gamma}(\varphi_t(x),t)}, \qquad \varphi_0(x)=x.
\end{equation}

This yields short time existence and uniqueness for the curve shortening flow (recall also that embeddedness is preserved). Finally, suppose that $\{\Gamma_t\}_{t\in [0,T)}$ is a solution on a maximal time interval $[0,T)$, but
\begin{equation}
\limsup_{t\to T}\sup_{\Gamma_t}\abs{\kappa}<\infty.
\end{equation}
Then, by Theorem \ref{thm_der_est} all derivatives of $\kappa$ are bounded as well. We can thus pass to a smooth limit $\Gamma_T$, and applying the short time existence result we can continue the evolution until time $T+\eps$; this contradicts the fact that $T$ was maximal and finishes the proof of Theorem \ref{thm_ex_uniq}.

\section{Huisken's monotonicity formula and applications}

Recall that under curve shortening flow the length evolves by
\begin{equation}\label{ev_of_length}
\frac{d}{dt}\int_{\Gamma_t} ds = -\int_{\Gamma_t} \kappa^2\, ds.
\end{equation}
However, since $\textrm{Length}(\lambda \Gamma)=\lambda \textrm{Length}(\Gamma)$, this is not that useful when considering blowup sequences with $\lambda\to \infty$. A great advance was made by Huisken, who discovered a scale invariant monotone quantity.
To describe this, let $\{\Gamma_t\subset \R^{2}\}$ be a curve shortening flow (say of closed curves, or complete curves with at most polynomial length growth),
let $X_0=(x_0,t_0)$ be a point in space-time, and let
\begin{equation}
 \rho_{X_0}(x,t)=(4\pi(t_0-t))^{-1/2} e^{-\frac{\abs{x-x_0}^2}{4(t_0-t)}}\qquad (t<t_0),
\end{equation}
be the $1$-dimensional backwards heat kernel centered at $X_0$.

\begin{theorem}[Huisken's monotonicity formula \cite{Huisken_monotonicity}]\label{thm_huisken_mon}
\begin{equation}\label{eq_huisken_mon}
 \frac{d}{dt}\int_{\Gamma_t} \rho_{X_0}\, ds = -\int_{\Gamma_t} \left|\kappa+\frac{\langle\gamma,N\rangle}{2(t_0-t)}\right|^2 \rho_{X_0}\, ds \qquad (t<t_0).
\end{equation}
\end{theorem}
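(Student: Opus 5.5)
We will prove \eqref{eq_huisken_mon} by a direct computation, differentiating under the integral sign. The one delicate point is that the formula as written presumes $x_0=0$, since $\langle\gamma,N\rangle$ stands for $\langle\gamma-x_0,N\rangle$; by translation invariance of the flow we may assume $x_0=0$. Using a parametrization $\gamma(\cdot,t)$ solving \eqref{eq_mcf2}, we write
\begin{equation*}
\int_{\Gamma_t}\rho\,ds=\int_{S^1}\rho(\gamma(x,t),t)\,\abs{\partial_x\gamma}\,dx .
\end{equation*}
The time derivative then has two contributions. The first comes from the length element: by the computation leading to \eqref{eq_gradientflow}, $\partial_t\abs{\partial_x\gamma}=-\kappa^2\abs{\partial_x\gamma}$. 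The second comes from $\frac{d}{dt}\rho(\gamma,t)=\partial_t\rho+\langle D\rho,\kappa N\rangle$. Altogether
\begin{equation*}
\frac{d}{dt}\int_{\Gamma_t}\rho\,ds=\int_{\Gamma_t}\left(\partial_t\rho+\kappa\langle D\rho,N\rangle-\kappa^2\rho\right)ds .
\end{equation*}

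The key step is the identity
\begin{equation*}
\partial_t\rho+\partial_s^2\rho-\frac{\langle D\rho,N\rangle^2}{\rho}=0 \quad\text{along }\Gamma_t,
\end{equation*}
where $\partial_s^2\rho$ denotes the second arclength derivative of $\rho$ restricted to the curve. This is where the choice of the one-dimensional kernel exponent $-1/2$ matters, and it is the step I expect to be the main obstacle, mainly as bookkeeping. To verify it, write $\tau=t_0-t$ and compute:
\begin{itemize}
\item $\partial_t\rho=\left(\frac{1}{2\tau}-\frac{\abs{\gamma}^2}{4\tau^2}\right)\rho$;
\item $D\rho=-\frac{\gamma}{2\tau}\rho$, so that $\langle D\rho,N\rangle^2/\rho=\frac{\langle\gamma,N\rangle^2}{4\tau^2}\rho$;
\item using $\partial_s\gamma=T$ and $\partial_s T=\kappa N$, one finds $\partial_s\rho=-\frac{\langle\gamma,T\rangle}{2\tau}\rho$ and
\begin{equation*}
\partial_s^2\rho=\left(-\frac{1}{2\tau}-\frac{\kappa\langle\gamma,N\rangle}{2\tau}+\frac{\langle\gamma,T\rangle^2}{4\tau^2}\right)\rho .
\end{equation*}
\end{itemize}
Since $\abs{\gamma}^2=\langle\gamma,T\rangle^2+\langle\gamma,N\rangle^2$, summing gives
\begin{equation*}
\partial_t\rho+\partial_s^2\rho=\left(-\frac{\langle\gamma,N\rangle^2}{4\tau^2}-\frac{\kappa\langle\gamma,N\rangle}{2\tau}\right)\rho .
\end{equation*}
It is cleaner to substitute this directly rather than go through the displayed identity.

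Now $\int_{\Gamma_t}\partial_s^2\rho\,ds=0$, because $\Gamma_t$ is closed; for complete curves with polynomial length growth this holds by the Gaussian decay of $\rho$. So we may replace $\partial_t\rho$ by $\partial_t\rho+\partial_s^2\rho$ inside the integral. Also $\kappa\langle D\rho,N\rangle=-\frac{\kappa\langle\gamma,N\rangle}{2\tau}\rho$. The integrand therefore becomes
\begin{equation*}
-\left(\kappa^2+\frac{\kappa\langle\gamma,N\rangle}{\tau}+\frac{\langle\gamma,N\rangle^2}{4\tau^2}\right)\rho=-\left|\kappa+\frac{\langle\gamma,N\rangle}{2\tau}\right|^2\rho .
\end{equation*}
This is exactly \eqref{eq_huisken_mon}. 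The remaining sign conventions ($N$ inward, $\partial_sT=\kappa N$) are consistent with those used in Proposition~\ref{prop_evol_eq}, so no sign issue arises.
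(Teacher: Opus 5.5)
Your proof is correct and is essentially the paper's argument. Like you, the paper reduces to $x_0=0$, combines $\frac{d}{dt}\rho=\partial_t\rho+\kappa\langle D\rho,N\rangle$ with the heat-kernel identity for $\rho$ to get a pointwise formula for $(\frac{d}{dt}+\partial_s^2-\kappa^2)\rho$, and integrates using $\frac{d}{dt}ds=-\kappa^2ds$ and $\int\partial_s^2\rho\,ds=0$.

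One correction: your displayed ``key identity'' $\partial_t\rho+\partial_s^2\rho-\langle D\rho,N\rangle^2/\rho=0$ is false as written. Your own bullet computations give $\partial_t\rho+\partial_s^2\rho=\kappa\langle D\rho,N\rangle-\langle D\rho,N\rangle^2/\rho$. The correct kernel identity, which is the one the paper uses, is $\partial_t\rho+\langle T,D^2\rho\,T\rangle+\langle D\rho,N\rangle^2/\rho=0$ with the ambient Hessian. Since you bypass the displayed identity and substitute the bullet computations directly, your argument is unaffected.
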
 

Huisken's monotonicity formula \eqref{eq_huisken_mon} can be thought of as weighted version of \eqref{ev_of_length}. A key property is its invariance under parabolic rescaling (cf. homework). Moreover, the equality case of \eqref{eq_huisken_mon} exactly characterizes the selfsimilarly shrinking solutions (cf. homework).

\begin{proof}
Without loss of generality let $X_0=(0,0)$. The proof essentially amounts to deriving belows pointwise identity \eqref{eq_pointwise} for $\rho=\rho_0$.

Since the tangential gradient of $\rho$ is given by $\partial_s\rho = \nabla\rho-\langle \nabla\rho, N\rangle N$,
the intrinsic Laplacian of $\rho$ can be expressed as
\begin{equation}
\partial_s^2\rho =\langle T,\nabla_T\partial_s\rho\rangle
=\langle T,\nabla_T\nabla\rho\rangle+\kappa\langle N, \nabla\rho\rangle.
\end{equation}
Observing also that $\tfrac{d}{dt}\rho=\partial_t \rho+\kappa \langle N, \nabla\rho\rangle$ along the flow, we compute
\begin{align}
(\tfrac{d}{dt}+\partial_s^2)\rho&=\partial_t\rho+\langle T,\nabla_T\nabla\rho\rangle+2\kappa \langle N, \nabla\rho\rangle\nonumber\\
&=\partial_t \rho+\langle T,\nabla_T\nabla\rho\rangle+\frac{\langle N,\nabla \rho\rangle^2}{\rho}-\left|{\kappa-\frac{\langle N ,\nabla\rho\rangle}{\rho}}\right|^2 \rho+\kappa^2\rho.
\end{align}
We can now easily check that $\partial_t \rho+\langle T,\nabla_T\nabla\rho\rangle+\frac{\langle N, \nabla \rho\rangle^2}{\rho}=0$. Thus 
\begin{equation}\label{eq_pointwise}
\left(\frac{d}{dt}+\partial^2_s-\kappa^2\right)\rho=-\left|\kappa-\frac{\langle\gamma,N\rangle}{2t}\right| ^2 \rho.
\end{equation}
Using also the evolution equation $\tfrac{d}{dt}ds =-\kappa^2ds $, we conclude that
\begin{equation}
 \frac{d}{dt}\int_{\Gamma_t} \rho\, ds = -\int_{\Gamma_t} \left|\kappa-\frac{\langle\gamma,N\rangle}{2t}\right|^2 \rho \, ds   \qquad (t<0).
\end{equation}
This proves the theorem.
\end{proof}

We will now explain how Huisken's monotonicity formula can be used to study singularities via blowup analysis. Let $\{\Gamma_t\subset\mathbb{R}^2\}_{t\in [0,T)}$ be a curve shortening flow of closed embedded curves, defined on a maximal time interval $[0,T)$. From the previous lecture we know that the curvature blows up at the singular time, i.e.
\begin{equation}\label{eq_curv_blow_up}
\limsup_{t\to T}\max_{\Gamma_t}\abs{\kappa}=\infty.
\end{equation}
In the following we will assume that the singularity forms with the so called type I blowup rate
\begin{equation}\label{typeone}
\max_{\Gamma_t}\abs{\kappa}\leq \frac{C}{\sqrt{T-t}},
\end{equation}
for some $C<\infty$ (this assumption will be justified in later lectures).

We say that $x_0\in\mathbb{R}^2$ is a blowup point if there are sequences $t_i\to T$, $p_i\in\Gamma_{t_i}$ such that $\abs{\kappa}(p_i)\to\infty$ and $p_i\to x_0$. By \eqref{eq_curv_blow_up} there indeed exists at least one blowup point $x_0$. We now rescale parabolically with center $(x_0,T)$, i.e. for $\lambda>0$ consider the rescaled flow
\begin{equation}
\Gamma^{\lambda}_t:=\lambda\cdot\left(\Gamma_{T+\lambda^{-2}t}-x_0\right),\qquad t\in [-\lambda^2T,0).
\end{equation}

\begin{claim}
For $\lambda\to\infty$ the flows $\{\Gamma_t^\lambda\}_{t\in [-\lambda^2T,0)}$ converge smoothly to the family of round shrinking circles $\{\partial B_{\sqrt{-2t}}(0)\}_{t\in(-\infty,0)}$.
\end{claim}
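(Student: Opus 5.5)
The plan is a standard blowup argument in four steps: compactness of the rescaled flows, self-similarity of the limit via Huisken's monotonicity formula, non-triviality of the limit at a blowup point, and classification of the limit as the shrinking circle.

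\emph{Step 1: compactness.} Under parabolic rescaling the type I bound \eqref{typeone} is scale invariant. For the rescaled flows it reads $\max_{\Gamma^\lambda_t}\abs{\kappa}\le C/\sqrt{-t}$, uniformly in $\lambda$. Applying Theorem \ref{thm_der_est} on time intervals $[t-\delta,t]$ with $t\le -\delta$ gives uniform bounds on all $\partial_s^\ell\kappa$ on compact subsets of $(-\infty,0)$. We also need local length bounds. Since the Gaussian density $\int\rho\,ds$ is monotone, it is bounded by its value at $t=0$, so $\int_{\Gamma^\lambda_t}\rho_{(0,0)}\,ds\le C'$. This yields uniform length bounds in balls, e.g. $\mathrm{Length}(\Gamma^\lambda_t\cap B_R)\le C'(R,t)$. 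By Arzel\`a--Ascoli we then obtain a subsequence $\lambda_i\to\infty$ along which $\Gamma^{\lambda_i}_t$ converges smoothly on compact subsets of $\mathbb{R}^2\times(-\infty,0)$ to a limit flow $\{\Gamma^\infty_t\}$. Each $\Gamma^\infty_t$ is a union of complete embedded curves, possibly with multiplicity, with $\abs{\kappa}\le C/\sqrt{-t}$.

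\emph{Step 2: the limit is self-similar.} Set $\Theta(t)=\int_{\Gamma_t}\rho_{(x_0,T)}\,ds$. It is nonincreasing and bounded below, so it has a limit $\Theta_\infty$ as $t\to T$. By scale invariance, $\int_{\Gamma^\lambda_t}\rho_{(0,0)}\,ds=\Theta(T+\lambda^{-2}t)$. Integrate \eqref{eq_huisken_mon} over $[a,b]\subset(-\infty,0)$ for the rescaled flows. The resulting left-hand side equals $\Theta(T+\lambda^{-2}a)-\Theta(T+\lambda^{-2}b)$, which tends to $0$. Passing to the limit, where the Gaussian tails are controlled by the polynomial length bounds, gives
\[
\kappa-\frac{\langle\gamma,N\rangle}{2t}=0 \quad\text{on } \Gamma^\infty_t .
\]
So the limit is a self-similarly shrinking solution, $\Gamma^\infty_t=\sqrt{-t}\,\Gamma^\infty_{-1}$.

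\emph{Step 3: non-triviality.} Since $x_0$ is a blowup point, there are points $p_i\to x_0$ with $\abs{\kappa}(p_i)\to\infty$. The type I bound says $\abs{\kappa}(p_i)\sqrt{T-t_i}\le C$, but we need this quantity bounded below as well, and we need $\lambda_i\abs{p_i-x_0}$ bounded. The cleanest route I would try first is to show $\Theta_\infty>1$ at a blowup point. Suppose instead $\Theta_\infty=1$. Then the limit is a multiplicity-one line, and a White/Brakke-type local regularity statement gives curvature bounds near $(x_0,T)$, which contradicts $x_0$ being a blowup point. Proving or assuming this regularity statement is the main obstacle of the whole argument. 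A more elementary alternative uses the type I bound and the curvature evolution: from \eqref{eq_evol_curv}, $\max\abs{\kappa}\ge 1/\sqrt{2(T-t)}$, but this only locates high curvature somewhere, not necessarily near $x_0$. Once the limit is known to be non-flat, it is not a union of lines through the origin.

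\emph{Step 4: classification.} Work on the time $-1$ slice with the equation $\kappa=-\langle\gamma,N\rangle/2$. This is the Abresch--Langer classification problem. The task is to show that the only embedded shrinkers with bounded curvature, finite Gaussian density and arising from embedded flows are lines through the origin and the circle of radius $\sqrt2$. Solving the ODE, every solution is either a line through $0$ or a closed curve wrapping around the origin. Embeddedness forces winding number one, and then the curve is the round circle. Multiplicity one of the limit follows because $\Gamma_t$ is embedded and encloses area $2\pi(T-t)$, which after rescaling equals $2\pi(-t)$; this matches exactly one circle of radius $\sqrt{-2t}$. Finally, since the limit is independent of the subsequence, the whole family $\{\Gamma^\lambda_t\}$ converges as $\lambda\to\infty$. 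Smooth convergence then follows from the derivative bounds in Step 1.
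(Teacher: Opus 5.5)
\textbf{The gap is the multiplicity of the limit.} Your Step 3 yields at best $\Theta_\infty>1$, and that only excludes a line of multiplicity one. It does not make the limit non-flat. A line through the origin with multiplicity two is a self-similar shrinker with $\kappa\equiv 0$ and Gaussian density $2$. It is compatible with everything in Steps 1--2. Theorem \ref{app_thm_easy_brakke} says nothing about it, because its hypothesis requires density below $1+\eps$. So your sentence ``once the limit is known to be non-flat'' has no justification, and nothing in Steps 1--4 rules this limit out.

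The multiplicity argument in Step 4 does not repair this. First, it addresses only the circle. Second, it is circular. The enclosed area is $A(0)-2\pi t$, which equals $2\pi(T-t)$ only if $T=A(0)/2\pi$. That identity is not available here: in the proof of Theorem \ref{thm_grayson} it is \emph{derived from} this very claim. If $A(0)>2\pi T$, the rescaled areas $\lambda^2(A(0)-2\pi T)-2\pi t$ diverge and tell you nothing. For the circle you could instead use connectedness and embeddedness of $\Gamma^\lambda_t$: an $m$-sheeted embedded cover of a circle splits into $m$ closed components. But that does nothing for the line.

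\textbf{The missing ingredient is Huisken's distance comparison principle, Theorem \ref{thm_huisken_dist}.} The paper invokes it exactly at this point. Since $R$ is scale invariant and nonincreasing, the rescaled flows satisfy $\frac{L}{\pi d}\sin\frac{\pi \ell}{L}\le R(0)$ uniformly in $\lambda$. The left side is at least $2\ell/(\pi d)$, because $\ell\le L/2$.

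Suppose two different sheets of $\Gamma^\lambda_t$ converged to the same piece of the limit. Then you could pick points on them with $d\to 0$. Meanwhile the curvature bound $\abs{\kappa}\le C/\sqrt{-t}$ keeps their intrinsic distance $\ell$ bounded below, since a short arc of bounded curvature is nearly straight and stays on one sheet. The ratio would then blow up, a contradiction. So the limit is embedded with multiplicity one.

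With that in hand, your $\Theta_\infty>1$ argument (or the paper's direct appeal to the local regularity theorem) excludes the line. The classification of embedded shrinkers then leaves only the circle of radius $\sqrt{2}$. With this replacement, the rest of your outline agrees with the paper's proof: compactness, self-similarity via the monotonicity formula, and full convergence from uniqueness of the limit.
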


\begin{proof}
For the rescaled flow the blowup rate \eqref{typeone} takes the form
\begin{equation}
\max_{\Gamma_t^\lambda}\abs{\kappa}\leq \frac{C}{\sqrt{-t}},\qquad t\in [-\lambda^2T,0).
\end{equation}
In particular, given any compact time interval $I\subset (-\infty,0)$, the flow $\{\Gamma_t^\lambda\}$ is defined on $I$ for $\lambda$ large enough, and has uniformly bounded curvature on $I$. By the derivative estimates (Theorem \ref{thm_der_est}) we also have locally uniform bounds for all the derivatives of the curvatures.
Thus, for any sequence $\lambda_i\to \infty$ we can find a subsequence $\lambda_{i_k}$ such that $\{\Gamma_t^{\lambda_{i_k}}\}$ converges smoothly to a limit $\{\Gamma_t\}_{t\in (0,\infty)}$.

We will now analyze the limit $\{\Gamma_t\}_{t\in (0,\infty)}$: By construction, the limit is an ancient solution of the curve shortening flow. The limit is embedded with multiplicity one (this follows e.g. from the quantitative embeddedness estimate from Lecture 5). Using the definition of blowup point and comparison with round shrinking circles we see that $\Gamma_{-1}^\lambda\cap B_2(x_0)\neq \emptyset$ for $\lambda$ large enough. Thus, the limit is nonempty. Now, by Huisken's monotonicity formula for every $t_1<t_2<0$ we have
\begin{equation}
\int_{t_1}^{t_2}\int_{\Gamma_t^\lambda} \left|\kappa-\frac{\langle\gamma,N\rangle}{2t}\right|^2 \rho\, ds\, dt =
\left[-\int_{\Gamma_t} \rho_{(x_0,T)}\, ds\right]_{T-t_1/\lambda^2}^{T-t_2/\lambda^2}\to 0
\end{equation}
as $\lambda\to\infty$. Thus, $\{\Gamma_t\}_{t\in (0,\infty)}$ is selfsimilarly shrinking and completely determined by its time slice $\Gamma_{-1}$ which satisfies
\begin{equation}
\kappa+\frac{\langle\gamma,N\rangle}{2}=0.
\end{equation}
Since $x_0$ is a blowup point, $\Gamma_{-1}$ cannot be a straight line by the local regularity theorem (see Theorem \ref{app_thm_easy_brakke} below). Thus, $\Gamma_{-1}$ must be a round circle of radius $\sqrt{2}$ (c.f. homework).

Finally, by uniqueness of the blowup limit, $x_0$ is unique and the subsequential convergence actually entails full convergence.
\end{proof}

To finish this lecture, let us discuss the local regularity theorem which says if the density
\begin{equation}
\Theta(\{\Gamma_t\},(x_0,t_0),r):=\int_{\Gamma_{t_0}-r^2 } \rho_{(x_0,t_0)}\,  ds
\end{equation}
is close to one, then the curvature is controlled. As usual, we write $X=(x,t)$ for points in space-time, and denote by $P_r(X)=B_r(x)\times (t-r^2,t]$ the parabolic ball with center $X$ and radius $r$.

\begin{theorem}[Local regularity theorem \cite{brakke,white_regularity}]\label{app_thm_easy_brakke}
There exist universal constants $\eps>0$ and $C<\infty$ with the following property:
If $\{\Gamma_t\subset \mathbb{R}^2\}_{t\in (t_0-2r^2,t_0]}$ is a curve shortening flow (say of closed curves, or complete curves with at most polynomial length growth) with
\begin{equation}
 \sup_{\overline{X}_0\in P_r(X_0)}\Theta (\{\Gamma_t\},\overline{X}_0,r)<1+\eps,
\end{equation}
then
\begin{equation}
 \sup_{P_{r/2}(X_0)}\abs{\kappa}\leq {C}r^{-1}.
\end{equation}
\end{theorem}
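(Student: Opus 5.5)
We argue by contradiction, combining a point-selection blowup with Huisken's monotonicity formula (Theorem \ref{thm_huisken_mon}). By parabolic rescaling, and since the density $\Theta$ is scale invariant, we may assume $r=1$ and $X_0=(0,0)$. Suppose the theorem fails. Then there are flows $\{\Gamma^i_t\}_{t\in(-2,0]}$ with
\begin{equation}
\sup_{\overline{X}\in P_1(0)}\Theta(\{\Gamma^i_t\},\overline{X},1)<1+\eps_i,\qquad \eps_i\to0,
\end{equation}
but $\sup_{P_{1/2}(0)}\abs{\kappa^i}\geq i$.

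\textbf{Point selection.} We first choose good blowup points. Consider the scale-weighted quantity
\begin{equation}
\sigma(X)\abs{\kappa}(X),\qquad \sigma(X)=\text{parabolic distance from } X \text{ to the boundary of } P_{3/4}(0).
\end{equation}
Pick $X_i\in P_{3/4}(0)$ that (nearly) maximizes $\sigma\abs{\kappa^i}$; this maximum tends to $\infty$. Set $Q_i=\abs{\kappa^i}(X_i)$. The standard argument then shows $\abs{\kappa^i}\leq 2Q_i$ on $P_{\sigma(X_i)/2}(X_i)$, and $Q_i\sigma(X_i)\to\infty$.

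\textbf{Rescaling and limit.} Rescale by $Q_i$ around $X_i$. The rescaled flows have $\abs{\kappa}\leq2$ on parabolic balls whose radii tend to infinity, and $\abs{\kappa}=1$ at the origin. By the derivative estimates of Theorem \ref{thm_der_est}, a subsequence converges smoothly on compact sets to a nonempty ancient flow $\{\Gamma^\infty_t\}_{t\leq 0}$ with $\abs{\kappa}\leq 2$ and $\abs{\kappa}(0,0)=1$.

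\textbf{Transferring the density bound.} For any fixed $Y$ and $\varrho>0$ in the rescaled picture, the original density at the corresponding point and scale $Q_i^{-1}\varrho\leq 1$ is at most the density at scale $1$, by monotonicity. The latter is $<1+\eps_i$, provided the corresponding point lies in $P_1(0)$, which holds since $X_i\in P_{3/4}(0)$. Passing to the limit, which requires local smooth convergence plus uniform control of the Gaussian tails, gives
\begin{equation}
\Theta(\{\Gamma^\infty_t\},Y,\varrho)\leq 1\quad\text{for all } Y,\varrho.
\end{equation}

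\textbf{Identifying the limit.} On the other hand, for a smooth flow the density tends to the multiplicity, which is at least $1$, as $\varrho\to0$. So Gaussian density is identically $1$, hence constant in $\varrho$. The equality case of Huisken's formula then forces $\kappa+\langle\gamma-y,N\rangle/(2(t_0-t))=0$ about every center $Y=(y,t_0)$. Taking two different space centers and subtracting gives $\langle y-y',N\rangle=0$ for all $y,y'$, so $N$ is constant and each $\Gamma^\infty_t$ is a union of lines. A single line has density exactly $1$, and extra components would push the density above $1$, so $\Gamma^\infty$ is a static multiplicity-one line with $\kappa\equiv0$. This contradicts $\abs{\kappa}(0,0)=1$.

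\textbf{Main obstacle.} The delicate step is the transfer of the density bound: the rescaled flows are only defined on bounded time intervals, and the Gaussian integrals must converge despite only local smooth convergence. I would handle this by a cutoff version of the monotonicity formula: multiply $\rho$ by a spatial cutoff $\varphi$, which creates an error controlled by the length in a ball. The length in a ball is in turn bounded by the density bound itself.
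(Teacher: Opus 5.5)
Your argument is correct and is essentially the paper's proof. The steps match: contradiction; a point selection giving $\abs{\kappa}\leq 2Q_i$ on parabolic balls of radius $\gg Q_i^{-1}$ (your maximization of $\sigma\abs{\kappa}$ replaces the paper's iterative doubling, and it works provided $\sigma$ is measured to the parabolic boundary rather than a boundary that includes the top slice $t=0$); rescaling to a nonflat limit; and the rigidity case of Huisken's monotonicity formula to force a multiplicity-one line.

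The step you flag as the main obstacle is actually harmless. Since $\rho\geq 0$, you can restrict the Gaussian integrals to a ball $B_R$, pass to the limit there by local smooth convergence, and then let $R\to\infty$; this already gives $\Theta(\{\Gamma^\infty_t\},Y,\varrho)\leq 1$ with no cutoff or tail control. One small imprecision: the lower bound $\Theta\geq 1$, and hence the equality case, only holds for centers $Y$ lying on the limit flow, not for every $Y$. This still suffices for your two-center argument, since you can take two distinct points $y,y'\in\Gamma^\infty_{0}$.
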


\begin{proof}
Suppose the assertion fails. Then there exist a sequence of curve shortening flows $\{\Gamma^j_t\subset \mathbb{R}^2\}_{t\in (-2,0]}$, with
\begin{equation}\label{blowup_easy_brakke}
 \sup_{\overline{X}_0\in P_1(0)}\Theta (\{\Gamma_t\},\overline{X}_0,1)<1+j^{-1},
\end{equation}
but such that there are points $X_j\in P_{1/2}(0)$ with $\abs{\kappa}(X_j)> j$.

We now select points $Y_j\in P_{3/4}(0)$ with $K_j=\abs{\kappa}(Y_j)> j$, such that
\begin{equation}\label{app_brakke_point_sel}
 \sup_{P_{{j}/{(10K_j)}}(Y_j)}\abs{\kappa}\leq 2 K_j.
\end{equation}
Let us explain how this point selection works: Fix $j$. If $Y^0_j=X_j$ already satisfies (\ref{app_brakke_point_sel}) with $K^0_j=\abs{\kappa}(Y^0_j)$, we are done. Otherwise, there is a point $Y^1_j\in P_{j/(10K^0_j)}(Y_j^0)$ with $K^1_j=\abs{\kappa}(Y^1_j)>2K^0_j$.
If $Y^1_j$ satisfies (\ref{app_brakke_point_sel}), we are done. Otherwise, there is a point $Y^2_j\in P_{j/(10K^1_j)}(Y_j^1)$ with $K^2_j=\abs{\kappa}(Y^2_j)>2K^1_j$, etc.
Note that $\frac{1}{2}+\frac{j}{10K_j^0}(1+\frac{1}{2}+\frac{1}{4}+\ldots)<\frac{3}{4}$. By smoothness, the iteration terminates after a finite number of steps, and the last point of the iteration lies in $P_{3/4}(0)$ and satisfies (\ref{app_brakke_point_sel}).

Continuing the proof of the theorem, let $\{\hat{\Gamma}_t^j\}$ be the flows obtained by shifting $Y_j$ to the origin and parabolically rescaling by $K_j=\abs{\kappa}(Y_j)$.
Since the rescaled flow satisfies $\abs{\kappa}(0)=1$ and $\sup_{P_{j/10}(0)}\abs{\kappa}\leq 2$, we can pass smoothly to a global limit. On the one hand, the limit is non-flat. On the other hand, by the rigidity case of Huisken's monotonicity formula and equation \eqref{blowup_easy_brakke} the the limit is a straight line; a contradiction.
\end{proof}

\section{Hamilton's Harnack inequality}

\begin{theorem}[Hamilton's Harnack inequality \cite{Hamilton_harnack}]\label{thm_hamilton_harnack}
If $\{\Gamma_t\subset\mathbb{R}^2\}_{t\in [0,T)}$ is a convex solution of the curve shortening flow (say closed or complete with bounded curvature) then
\begin{equation}
\frac{\kappa_t}{\kappa}-\frac{\kappa_s^2}{\kappa^2}+\frac{1}{2t}\geq 0.
\end{equation}
\end{theorem}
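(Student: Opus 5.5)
Since $\kappa>0$, we can set $L=\log\kappa$. The inequality then reads $L_t-L_s^2+\frac1{2t}\ge 0$. Using Proposition \ref{prop_evol_eq}, $\kappa_t=\kappa_{ss}+\kappa^3$ gives $L_t=L_{ss}+L_s^2+\kappa^2$, so the Harnack quantity is
\begin{equation*}
Q:=\frac{\kappa_t}{\kappa}-\frac{\kappa_s^2}{\kappa^2}=L_{ss}+\kappa^2 = \frac{\kappa_{ss}}{\kappa}-\frac{\kappa_s^2}{\kappa^2}+\kappa^2 .
\end{equation*}
I would compute the evolution of $Q$ and show that $tQ+\tfrac12$ stays nonnegative by the maximum principle. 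More precisely, I would show that $Q$ satisfies a parabolic inequality of the form
\begin{equation*}
(\partial_t-\partial_s^2)Q\geq 2\frac{\kappa_s}{\kappa}\partial_s Q+2Q^2 .
\end{equation*}
Here the gradient term is harmless at a minimum point, and the $2Q^2$ term is exactly what makes $-\frac{1}{2t}$ a subsolution barrier, since $\frac{d}{dt}(-\tfrac1{2t})=\tfrac{1}{2t^2}=2(-\tfrac{1}{2t})^2$.

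The main computation, which I expect to be the obstacle, is to derive this inequality. I would proceed step by step:
\begin{enumerate}
\item Use the commutator $\partial_t\partial_s=\partial_s\partial_t+\kappa^2\partial_s$, which follows from $\tfrac{d}{dt}ds=-\kappa^2ds$ as in the proof of Theorem \ref{thm_der_est}.
\item Compute $(\partial_t-\partial_s^2)L_s=2L_sL_{ss}+4\kappa^2L_s$. This follows by differentiating the equation for $L$, using $(\kappa^2)_s=2\kappa^2L_s$, and adding the commutator term $\kappa^2L_s$.
\item Compute $(\partial_t-\partial_s^2)L_{ss}$, which needs the commutator once more, together with $(\partial_t-\partial_s^2)\kappa^2=-2\kappa_s^2+2\kappa^4$ from \eqref{eq_kappasq}.
\item Add the results and regroup all terms in the variables $Q$ and $L_s$.
\end{enumerate}
I expect the result to be an identity of the form $(\partial_t-\partial_s^2)Q=2L_sQ_s+2Q^2$, possibly with a leftover term. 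If there is a leftover term, I would try to show it is nonnegative, e.g.\ a square such as $2(L_{ss}+\dots)^2$ minus something absorbable. Hamilton's original proof gets an exact identity in the curve case, and I would aim for that. If the direct bookkeeping gets messy, a cleaner alternative is to parametrize the convex curve by normal angle $\theta$. Then $\kappa_t=\kappa^2\kappa_{\theta\theta}+\kappa^3$, and the Harnack quantity becomes $\kappa_t/\kappa$ at fixed $\theta$, up to the tangential correction. In these coordinates the derivation of the $Q^2$ term is more transparent.

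Finally, I would apply the maximum principle to $\tilde Q=tQ+\tfrac12$, which satisfies
\begin{equation*}
(\partial_t-\partial_s^2)\tilde Q\geq 2L_s\tilde Q_s+\frac{2}{t}\Bigl(\tilde Q-\tfrac12\Bigr)^2+\frac{1}{t}\Bigl(\tilde Q-\tfrac12\Bigr).
\end{equation*}
At a first time where $\min\tilde Q$ reaches $0$, the right-hand side equals $\tfrac{1}{2t}-\tfrac{1}{2t}=0$. To avoid this degenerate equality, I would instead compare $Q$ with $-\frac{1}{2(t-\delta)}$ on $(\delta,T)$, or with $-\frac{1+\eta}{2t}$, where strict inequality at the minimum gives a contradiction. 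Then I would let $\delta,\eta\to0$. Near $t=0$ (respectively near $t=\delta$) the claim is automatic, since $Q$ is bounded for a smooth solution while the barrier tends to $-\infty$.

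For closed curves, the minimum is attained and this argument is complete. For complete noncompact curves with bounded curvature, I would add the usual localization or decay argument, using the derivative estimates of Theorem \ref{thm_der_est} to make the maximum principle applicable.
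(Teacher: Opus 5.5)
Your plan is essentially the paper's proof. With $f=\log\kappa$, the paper's quantity is $F=t(f_s^2-f_t)=-tQ$, and its identity $F_{ss}-F_t=-2f_sF_s+\frac1t F(2F-1)$ is exactly your $(\partial_t-\partial_s^2)Q=2L_sQ_s+2Q^2$, which indeed holds with no leftover term.

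There are two small corrections.
\begin{enumerate}
\item In step 2 the coefficient should be $3\kappa^2L_s$: $2\kappa^2L_s$ comes from $(\kappa^2)_s$ and $\kappa^2L_s$ from the commutator. With $4\kappa^2L_s$ you would end up with a spurious leftover $\kappa^2(L_{ss}+2L_s^2)$, which has no sign.
\item The $\delta,\eta$ perturbation is unnecessary. At a space-time minimum of $\tilde Q$ with $\tilde Q<0$, the reaction term $\frac{2}{t}\tilde Q(\tilde Q-\frac12)$ is strictly positive, which already gives the contradiction. This is how the paper concludes, arguing at a maximum with $F>\frac12$.
\end{enumerate}
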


\begin{proof}
The proof is very similar to the one of the Li-Yau Harnack inequality \cite{LiYau}. It suffices to consider the case where the solution existed since time $-\eps$ and $\kappa\geq \eps$. Let $f:=\log \kappa$ and $F:=t (f_s^2-f_t)$. We want to use the maximum principle to show that $F\leq 1/2$ for all $t\in [0,T)$. Note that $F\leq 1/2$ for small $t$. We compute
\begin{align}
F_{ss}&=t(2f_sf_{sss}+2f_{ss}^2- (f_{t})_{ss})\nonumber\\
&=t(2f_sf_{sss}+2f_{ss}^2- (f_{ss})_t+2\kappa^2f_{ss}+2\kappa^2f_s^2),
\end{align}
where we used the commutator formula $[\partial_t,\partial_s]=\kappa^2\partial_s$.
Using the evolution equation for $\kappa$ we see that $f_{ss}=-F/t-\kappa^2$ and thus
\begin{multline}
F_{ss}=-2f_sF_{s}+\tfrac{2F^2}{t}-\tfrac{F}{t}+F_t\\
-4t\kappa^2f_s^2+4F\kappa^2+2t\kappa^4+2t\kappa^2 f_t-2\kappa^2F-2t\kappa^4+2t\kappa^2f_s^2.
\end{multline}
Miraculously, the nonlinear terms cancel and the quantity on the last line is identically zero, i.e.
\begin{equation}
F_{ss}-F_t=-2f_sF_{s}+\tfrac{1}{t}F(2F-1)
\end{equation}
If there is a maximum point $(x_0,t_0)$ with $F(x_0,t_0)>1/2$, then
\begin{equation}
0\geq (F_{ss}-F_t)|_{(x_0,t_0)}\geq 0 + \tfrac{1}{t}F(x_0,t_0)(2F(x_0,t_0)-1)>0;
\end{equation}
a contradiction. This proves the theorem.
\end{proof}

Applying the Harnack inequality with $t$ replaced by $t-t_0$ and taking the limit $t_0\to -\infty$ we obtain:

\begin{corollary}
If $\{\Gamma_t\subset\mathbb{R}^2\}_{t\in (-\infty,T)}$ is an ancient convex solution of the curve shortening flow then
\begin{equation}
\frac{\kappa_t}{\kappa}-\frac{\kappa_s^2}{\kappa^2}\geq 0,
\end{equation}
in particular $\kappa_t \geq 0$.
\end{corollary}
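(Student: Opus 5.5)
The plan is to apply Theorem \ref{thm_hamilton_harnack} to time-translated copies of the ancient solution and let the starting time run off to $-\infty$. Fix a point $(x,t)$ with $t<T$. For any $t_0<t$, the family $\{\Gamma_{\tau+t_0}\}_{\tau\in[0,T-t_0)}$ is a convex solution of the curve shortening flow on $[0,T-t_0)$. It is closed, or complete with bounded curvature, under the standing assumptions of the corollary; in the complete case I would take bounded curvature on each $[t_0,t]$ as part of the setting. Since the equation is autonomous, the time translation does not change $\kappa$, $\kappa_s$ or $\kappa_t$ at corresponding points. Theorem \ref{thm_hamilton_harnack} applied at translated time $\tau=t-t_0>0$ therefore gives
\begin{equation*}
\frac{\kappa_t}{\kappa}-\frac{\kappa_s^2}{\kappa^2}+\frac{1}{2(t-t_0)}\geq 0
\end{equation*}
at $(x,t)$.

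The left-hand side is a fixed number, apart from the term $1/(2(t-t_0))$. Letting $t_0\to-\infty$, this term tends to zero, so
\begin{equation*}
\frac{\kappa_t}{\kappa}-\frac{\kappa_s^2}{\kappa^2}\geq 0.
\end{equation*}
Multiplying by $\kappa$ gives $\kappa_t\geq \kappa_s^2/\kappa\geq 0$.

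This last step needs $\kappa>0$. I read "convex" as strictly convex, as in the Harnack theorem, where $\log\kappa$ must make sense. If only $\kappa\geq 0$ were assumed, I would first invoke the strong maximum principle for $\kappa_t=\kappa_{ss}+\kappa^3$ (Proposition \ref{prop_evol_eq}). Either $\kappa>0$ everywhere, or $\kappa\equiv 0$ and the curve is a line, in which case $\kappa_t=0$ trivially.

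The main thing to check is that the hypotheses of Theorem \ref{thm_hamilton_harnack} hold uniformly enough for each translated flow, in particular the initial-time reduction used in its proof. That reduction only requires the solution on a slightly earlier interval, which the ancient solution provides. Everything else is an immediate limit.
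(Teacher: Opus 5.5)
Your proposal is correct and is essentially the paper's own argument. The paper applies Hamilton's Harnack inequality to the flow translated to start at $t_0$, which gives the error term $1/(2(t-t_0))$, and then lets $t_0\to-\infty$. Your remarks on strict convexity, the strong maximum principle and restricting to compact time intervals only spell out assumptions the paper leaves implicit.
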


\begin{theorem}[Translating solitons \cite{Hamilton_harnack}]\label{thm_hamilton_harnack}
Any eternal strictly convex solution $\{\Gamma_t\subset\mathbb{R}^2\}_{t\in (-\infty,\infty)}$  of the curve shortening flow such that $\kappa$ has a critical point somewhere in space time, must be a translating soliton, i.e. there exists some vector $V\in \mathbb{R}^2$ such that $\Gamma_t = \Gamma_0+t V$.
\end{theorem}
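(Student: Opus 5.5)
The plan follows Hamilton's argument: show that the Harnack quantity of the Corollary satisfies a parabolic equation with a nonnegative reaction term, then apply the strong maximum principle at the critical point. Write $f=\log\kappa$ (well defined by strict convexity) and set
\begin{equation*}
Q:=f_t-f_s^2=\frac{\kappa_t}{\kappa}-\frac{\kappa_s^2}{\kappa^2}.
\end{equation*}
Since the solution is eternal, in particular ancient, the Corollary to Theorem \ref{thm_hamilton_harnack} gives $Q\geq 0$ everywhere in space-time. At a space-time critical point $(x_0,t_0)$ of $\kappa$ we have $\kappa_s=\kappa_t=0$, hence $Q(x_0,t_0)=0$. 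So $Q$ attains its minimum value $0$ at an interior point.

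Next I derive the evolution of $Q$. The computation in the proof of Hamilton's Harnack inequality is purely algebraic, using only $f_t=f_{ss}+f_s^2+\kappa^2$ (the evolution equation \eqref{eq_evol_curv} rewritten for $f$) and $[\partial_t,\partial_s]=\kappa^2\partial_s$. It shows that $F=\tau(f_s^2-f_t)$, with $\tau=t-t_0'$ for any time shift $t_0'$, satisfies
\begin{equation*}
F_{ss}-F_t=-2f_sF_s+\tfrac1\tau F(2F-1).
\end{equation*}
Substituting $F=-\tau Q$ and cancelling the $\tau$-dependence should give
\begin{equation*}
Q_t=Q_{ss}+2f_sQ_s+2Q^2,
\end{equation*}
which can also be checked directly. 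Because $2Q^2\geq 0$, $Q$ is a nonnegative supersolution of a linear parabolic equation with smooth coefficients. The strong maximum principle, which is a local statement and so needs no compactness, then forces $Q\equiv 0$ on $\Gamma_t$ for all $t\leq t_0$. I expect the main obstacle to be extending this to $t>t_0$, since the strong maximum principle only propagates backwards in time. My plan is to first prove the translator structure on $(-\infty,t_0]$, and then argue that $\Gamma_{t_0}+(t-t_0)V$ is a solution with the same data at time $t_0$. Uniqueness (Theorem \ref{thm_ex_uniq}, or its analogue for complete curves of bounded curvature) then identifies it with $\Gamma_t$ for $t\geq t_0$.

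Finally, I show that $Q\equiv 0$ characterizes translators. From $f_t=f_s^2$ and the evolution of $f$ we get $f_{ss}=-\kappa^2$, i.e. $(\kappa_s/\kappa)_s=-\kappa^2$. By strict convexity we may use the tangent angle $\theta$ as a parameter, with $d\theta=\kappa\, ds$. Then $\kappa_s/\kappa=\kappa_\theta$ and the equation becomes $\kappa\kappa_{\theta\theta}=-\kappa^2$, so
\begin{equation*}
\kappa_{\theta\theta}+\kappa=0.
\end{equation*}
Hence $\kappa=a\cos\theta+b\sin\theta=\langle V,N\rangle$ for some vector $V$. In angle coordinates the standard evolution $\kappa_t=\kappa^2(\kappa_{\theta\theta}+\kappa)$ vanishes, so $V$ is independent of time. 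Thus the normal velocity of $\Gamma_t$ equals $\langle V,N\rangle$, which is the normal velocity of $\Gamma_0+tV$. The two families therefore differ only by tangential reparametrization, giving $\Gamma_t=\Gamma_0+tV$.
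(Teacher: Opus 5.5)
Your proposal is correct and follows the same route as the paper. Nonnegativity of the Harnack quantity comes from the ancient-solution corollary, and it vanishes at the critical point. The strong maximum principle then propagates this backwards in time; you usefully make explicit the evolution $Q_t=Q_{ss}+2f_sQ_s+2Q^2$, which the paper only alludes to. This gives translator structure on $(-\infty,t_0]$, and uniqueness carries it forward. The only variation is the last step: you use the tangent-angle parametrization ($\kappa_{\theta\theta}+\kappa=0$ together with $\kappa_\tau=\kappa^2(\kappa_{\theta\theta}+\kappa)=0$), whereas the paper checks directly that $V=-\frac{\kappa_s}{\kappa}T+\kappa N$ satisfies $V_s=V_t=0$; the vector you obtain is the same one.
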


\begin{remark}
The only strictly convex translating soliton, up to scaling and rigid motion, is the grim reaper (c.f. homework).
\end{remark}

\begin{proof}
Assume $\kappa$ has a critical point at $(x_0,t_0)$, i.e. $\kappa_t=0=\kappa_s$ at $(x_0,t_0)$. The Harnack quantity
\begin{equation}
Z=\frac{\kappa_t}{\kappa}-\frac{\kappa_s^2}{\kappa^2}
\end{equation}
satisfies $Z\geq 0$ and $Z(x_0,t_0)=0$. Using the strict maximum principle (c.f. the evolution equations in the above proof), we see that $Z\equiv 0$ for all $t\leq t_0$, i.e.
\begin{equation}\label{time_der_kappa}
\kappa_t = \frac{\kappa_s^2}{\kappa}.
\end{equation}
Consider the vector field
\begin{equation}
V:=-\frac{\kappa_s}{\kappa}T+\kappa N.
\end{equation}
We compute
\begin{equation}
V_s=\left(-\frac{\kappa_{ss}}{\kappa}+\frac{\kappa_s^2}{\kappa^2}-\kappa^2\right)T+(\kappa_s - \kappa_s)N.
\end{equation}
Using equation \eqref{time_der_kappa} we see that $V_s\equiv 0$. Similarly, $V_t\equiv 0$. Thus $V$ is a constant vector. Since the normal component of $V$ is given by $\kappa N$, this implies that $\Gamma_t = \Gamma_{t_0}+(t-t_0) V$ for $t\leq t_0$. By uniqueness of the curve shortening flow we conclude that  $\Gamma_t = \Gamma_0+t V$ for all $t$.
\end{proof}

\section{Huisken's distance comparison principle}

In this lecture, we discuss Huisken's estimate for the ratio between the intrinsic and extrinsic distance along the curve shortening flow. This can be thought of as quantitative version of the fact that embeddedness is preserved along the curve shortening flow.
Let $X:S^1\times [0,T)\to \mathbb{R}^2$ be a family of embedded curves evolving by curve shortening flow. Let $L(t)$ be the total length of the curve at time $t$. Given two points $x,y\in S^1$, denote by $\ell(x,y,t)$ the intrinsic distance between $X(x,t)$ and $X(y,t)$, and by $d(x,y,t)=\abs{X(x,t)-X(y,t)}$ the extrinsic distance. Following Huisken, we consider the quantity
\begin{equation}
R(t):=\sup_{x\neq y} \frac{L(t)}{\pi d(x,y,t)}\sin\frac{\pi \ell(x,y,t)}{L(t)}.
\end{equation}

\begin{remark}
Note that $R\geq 1$, and $R=1$ only on the round circle.
\end{remark}

\begin{remark} Note that for $\ell\ll L$ we get $\frac{L}{\pi d}\sin\frac{\pi \ell}{L}\sim \frac{\ell}{d}$, i.e. the ratio between intrinsic and extrinsic distance. Since $\sin(\frac{\pi}{2}+\varphi)=\sin(\frac{\pi}{2}-\varphi)$, the function $\sin\frac{\pi \ell(x,y,t)}{L(t)}$ is smooth even at points with $\ell(x,y,t)=L(t)/2$.

\end{remark}

\begin{theorem}[Huisken's distance comparison principle \cite{Huisken_distance}]\label{thm_huisken_dist}
If a family of closed embedded curves in the plane evolves by curve shortening flow, then the function $R(t)$ is nonincreasing in time.
\end{theorem}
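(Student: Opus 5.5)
We argue by the maximum principle applied to the two-point function
\begin{equation*}
Z(x,y,t)=\frac{L(t)}{\pi}\sin\frac{\pi\ell(x,y,t)}{L(t)}-R_0\, d(x,y,t),
\end{equation*}
where $R_0=R(t_0)$. The claim to prove is that $Z\leq 0$ for all $t\geq t_0$. To make the argument strict, we work with $R_0+\delta$ and let $\delta\to 0$ at the end.

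\textbf{Step 1: the diagonal.} We first check that $Z$ cannot first become positive near the diagonal $x=y$. There $\frac{L}{\pi}\sin\frac{\pi\ell}{L}\approx \ell$ and $d\approx\ell$, so $Z\approx(1-R_0-\delta)\ell<0$, because $R_0\geq 1$. A short Taylor expansion using bounded curvature makes this uniform in a neighborhood of the diagonal. Hence if $Z$ ever becomes positive, there is a first time $t_1$ and a pair $x\neq y$ with $Z(x,y,t_1)=0=\max Z$.

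\textbf{Step 2: first and second variations at $(x,y,t_1)$.} Parametrize by arclength near $p=X(x)$ and $q=X(y)$, and set $w=(q-p)/d$. We choose the arc from $p$ to $q$ of length $\ell\leq L/2$. The first variation in $s_x$ and $s_y$ gives
\begin{equation*}
\cos\frac{\pi\ell}{L}=R\langle w,T_q\rangle=-R\langle w,T_p\rangle .
\end{equation*}
Hence $T_p$ and $T_q$ make symmetric angles with $w$, and either $T_q$ is the reflection of $T_p$ across $w^\perp$, or the curve is tangentially symmetric.

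For the second variation we use the combined direction $\partial_{s_x}-\partial_{s_y}$ when $\ell$ is measured along the arc so that $\ell$ changes at rate $2$; in the degenerate case we use $\partial_{s_x}+\partial_{s_y}$. Along this direction $\ell$ changes linearly, and $d$ has second derivative involving $\langle w,\kappa_qN_q-\kappa_pN_p\rangle$ and $(1-\langle T_p,T_q\rangle^{\pm})/d$. We will get
\begin{equation*}
0\geq -\frac{4\pi}{L}\sin\frac{\pi\ell}{L}-R\Big(\langle w,\kappa_qN_q-\kappa_pN_p\rangle+\text{(tangential terms)}\Big).
\end{equation*}
The tangential terms vanish by the first-order condition with the correct choice of sign.

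\textbf{Step 3: the time derivative.} We have
\begin{equation*}
\partial_t d=\langle w,\kappa_qN_q-\kappa_pN_p\rangle,\qquad \partial_t\ell=-\int_p^q\kappa^2\,ds,\qquad \partial_tL=-\int\kappa^2\,ds .
\end{equation*}
Computing $\partial_t\big(\frac{L}{\pi}\sin\frac{\pi\ell}{L}\big)$ and using $\cos\frac{\pi\ell}{L}\geq 0$, $\ell\leq L/2$, we bound it by
\begin{equation*}
-\cos\Big(\frac{\pi\ell}{L}\Big)\int_p^q\kappa^2\,ds+\ldots
\end{equation*}
together with the $L$-derivative term. The key inequality is a Wirtinger/Cauchy--Schwarz bound on the arc from $p$ to $q$. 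Combined with $\int_{\text{arc}}\kappa\,ds$, which is the turning angle fixed by the first-order conditions, it shows that the curvature term in $\partial_t Z$ is dominated by the $\frac{4\pi}{L}\sin$ term coming from the second variation. Putting Steps 2 and 3 together gives $\partial_tZ<0$ at $(x,y,t_1)$, contradicting $Z(x,y,t_1)=0=\max Z$ at the first time $t_1$.

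\textbf{Main obstacle.} The hardest step is this last estimate on the $\int\kappa^2$ terms. The curvature integral must be controlled by the turning angle $\theta$ over the arc via Cauchy--Schwarz, $\int_p^q\kappa^2\geq\theta^2/\ell$. One must then show that the resulting expression in $\theta$, $\ell/L$ and $R$ has the right sign, using the first-order relation between $\theta$ and $\frac{\pi\ell}{L}$ and the elementary trigonometric inequalities for $\sin$ and $\cos$ on $[0,\pi/2]$. I would first treat the case $\ell<L/2$ strictly. I would then handle $\ell=L/2$ using the smoothness remark above, which lets us vary across $L/2$ symmetrically.
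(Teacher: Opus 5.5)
\textbf{There is a genuine gap in the final estimate.} Your setup matches the paper's: the same two-point function (opposite sign, with $R_0+\delta$ playing the role of $r$), a first touching time away from the diagonal, and the first-order condition $\cos\alpha=\cos(\pi\ell/L)/R$, which gives $\alpha\geq\pi\ell/L$. The second variation is taken in the direction where $\ell$ changes at rate $2$, so the curvature terms cancel against $\partial_t d$, and Cauchy--Schwarz on the arc gives $\int_p^q\kappa^2\geq 4\alpha^2/\ell$. Your treatment of the diagonal, and of the configuration where the tangents are parallel via the $\ell$-preserving direction, is more careful than the paper's; the paper simply asserts $\langle T(\bar x),T(\bar y)\rangle=\cos 2\alpha$.

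The problem is that the last step, as you describe it, does not close. After the cancellation, set $\theta=\pi\ell/L$. At the touching point you need
\[
\Big(\frac{1}{\pi}\sin\theta-\frac{\ell}{L}\cos\theta\Big)\int_{\Gamma}\kappa^2\,ds+\cos\theta\int_p^q\kappa^2\,ds>\frac{4\pi}{L}\sin\theta .
\]
So the $\int\kappa^2$ terms from the time derivative must dominate the $\frac{4\pi}{L}\sin\theta$ term from the second variation, not the other way round as you wrote. The arc estimate alone cannot give this. If you discard the first term (it is nonnegative, since $\tan\theta\geq\theta$), you would need $\alpha^2>\theta\tan\theta$. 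That fails whenever $R$ is close to $1$, because then $\alpha$ is close to $\theta$ and $\theta^2<\theta\tan\theta$. The expression you propose to sign is therefore not a function of $\theta$, $\ell/L$ and $R$ alone: it contains $L\int_\Gamma\kappa^2$, and your plan gives no way to control that term.

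\textbf{The missing ingredient is the global bound.} You need
\[
\int_\Gamma\kappa^2\,ds\geq\frac1L\Big(\int_\Gamma\kappa\,ds\Big)^2=\frac{4\pi^2}{L},
\]
which uses that a simple closed curve has total curvature $2\pi$. Inserting it into the $L$-derivative term produces exactly $\frac{4\pi}{L}\sin\theta-\frac{4\pi\theta}{L}\cos\theta$. This cancels the second-variation term, and after the arc estimate what remains is $\frac{4}{\ell}(\alpha^2-\theta^2)\cos\theta\geq 0$. This is the paper's computation.

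You also need to say where the strict inequality comes from. One source is $\int_\Gamma\kappa^2>4\pi^2/L$: a circle has $R=1$, so with $R_0+\delta>1$ the function $Z$ could not vanish at $x\neq y$, hence the curve is not a circle. The other source is $\alpha>\theta$ together with $\cos\theta>0$, valid when $\ell<L/2$; at $\ell=L/2$ only the first is available.

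A minor point: your relation $\cos\frac{\pi\ell}{L}=-R\langle w,T_p\rangle$ holds only if $T_p$ points away from the arc. With that convention the $\ell$-changing direction is $\partial_{s_x}+\partial_{s_y}$, not $\partial_{s_x}-\partial_{s_y}$. Fix one orientation convention and use it throughout.
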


\begin{proof}
Let us describe the proof following \cite{Huisken_distance} and \cite{Brendle_twopoint}. If the assertion is false, we can find times $t_1<t_2$ and a real number $r>1$ such that
\begin{equation}\label{ass_real_r}
R(t_1)<r \qquad\textrm{and}\qquad R(t_2)>r.
\end{equation}
We now consider the function
\begin{equation}
Z(x,y,t):= r d(x,y,t)-\frac{L(t)}{\pi}\sin \frac{\pi \ell(x,y,t)}{L(t)}.
\end{equation}
By \eqref{ass_real_r} there exists a time $\bar{t}\in (t_0,t_1)$ and a pair of points $\bar{x}\neq \bar{y}$ such that $Z(\bar{x},\bar{y},\bar{t})=0$, and $Z(x,y,t)\geq 0$ for all $x,y\in S^1$ and all $t\in(t_0,\bar{t})$.

Without loss of generality we can assume that the parametrization at time $\bar{t}$ is by arclength, and that the orientation is chosen such that $\partial_x\ell (\bar{x},\bar{y},\bar{t})=-1$ and $\partial_y\ell (\bar{x},\bar{y},\bar{t})=+1$. 

We start by computing the first derivatives
\begin{equation}
\frac{\partial Z}{\partial x}(x,y,\bar{t}) = r \, \frac{\langle X(x,\bar{t})-X(y,t),\frac{\partial X}{\partial x}(x,\bar{t}) \rangle}{|X(x,\bar{t})-X(y,\bar{t})|} + \cos \frac{\pi \, \ell(x,y,\bar{t})}{L(\bar{t})},
\end{equation}
and 
\begin{equation}
\frac{\partial Z}{\partial y}(x,y,\bar{t}) = -r \, \frac{\langle X(x,\bar{t})-X(y,t),\frac{\partial X}{\partial y}(y,\bar{t}) \rangle}{|X(x,\bar{t})-X(y,\bar{t})|} - \cos \frac{\pi \, \ell(x,y,\bar{t})}{L(\bar{t})}.
\end{equation}
These first derivatives vanish when evaluated at $(\bar{x},\bar{y},t)$. In particular, adding these two identities gives
\begin{equation}\label{sum_first_partials}
\Big \langle X(\bar{x},\bar{t})-X(\bar{y},\bar{t}),\frac{\partial X}{\partial x}(\bar{x},\bar{t})-\frac{\partial X}{\partial y}(\bar{y},\bar{t}) \Big \rangle = 0.
\end{equation}

To keep the notation concise, we write $T(\bar x)=\frac{\partial X}{\partial x}(\bar{x},\bar{t})$,  $T(\bar y)=\frac{\partial X}{\partial y}(\bar{y},\bar{t})$,  $\kappa(\bar x)N(\bar x)=\frac{\partial^2 X}{\partial x^2}(\bar{x},\bar{t})$ and $\kappa(\bar y)N(\bar y)=\frac{\partial^2 X}{\partial y^2}(\bar{y},\bar{t})$, and also use the abbreviations $d=d(\bar{x},\bar{y},\bar{t})$, $\ell=\ell(\bar{x},\bar{y},\bar{t})$, $L=L(\bar{t})$, and
\begin{equation}
\omega=\frac{X(\bar{y},\bar{t})-X(\bar{x},\bar{t})}{d}.
\end{equation}
Using this notation, the identity \eqref{sum_first_partials} can be rewritten as
\begin{equation}
\langle \omega, T(\bar x)\rangle = \langle \omega, T(\bar y)\rangle.
\end{equation}
We next compute the second order partial $x$-derivatives of $Z$:
\begin{align*} 
\frac{\partial^2 Z}{\partial x^2}(\bar{x},\bar{y},\bar{t})= \frac{r}{d}\left(1-\langle \omega, T(\bar x)\rangle^2\right)-r\kappa(\bar x)\langle \omega, N(\bar x)\rangle+\frac{\pi}{L}\sin\frac{\pi \ell}{L}.
\end{align*}
Similarly,
\begin{align*} 
\frac{\partial^2 Z}{\partial y^2}(\bar{x},\bar{y},\bar{t})= \frac{r}{d}\left(1-\langle \omega, T(\bar y)\rangle^2\right)+r\kappa(\bar y) \langle \omega, N(\bar y)\rangle+\frac{\pi}{L}\sin\frac{\pi \ell}{L},
\end{align*}
and
\begin{align*} 
\frac{\partial^2 Z}{\partial x\partial y}(\bar{x},\bar{y},\bar{t})= -\frac{r}{d}\left( \langle T(\bar x),T(\bar y)\rangle-
\langle T(\bar x),\omega \rangle\langle \omega, T(\bar y)\rangle\right)-\frac{\pi}{L}\sin \frac{\pi\ell}{L}.
\end{align*}
Define $\alpha\in (0,\pi/2)$ by $\cos\alpha = \langle \omega, T(\bar x)\rangle=\langle \omega, T(\bar y)\rangle$. Then $\langle T(\bar x),T(\bar y)\rangle = \cos(2\alpha)$, and thus
\begin{multline}
\frac{\partial^2 Z}{\partial x^2}(\bar{x},\bar{y},\bar{t})+\frac{\partial^2 Z}{\partial y^2}(\bar{x},\bar{y},\bar{t})-2\frac{\partial^2 Z}{\partial x\partial y}(\bar{x},\bar{y},\bar{t})=\\
-r\kappa(\bar x) \langle \omega, N(\bar x)\rangle+r\kappa(\bar y) \langle \omega, N(\bar y)\rangle+\frac{4\pi}{L}\sin\frac{\pi\ell}{L}.
\end{multline}
Finally, the time derivative of $Z$ can be computed as
\begin{align*} 
\frac{\partial Z}{\partial t}(\bar{x},\bar{y},\bar{t}) 
&=- r\langle \omega, \kappa(\bar x) N(\bar x)-\kappa(\bar y) N(\bar y)\rangle\\
&+ \Big (\frac{1}{\pi} \sin \frac{\pi \ell}{L} - \frac{\ell}{L} \cos \frac{\pi \ell}{L} \Big ) \int_{S^1} \kappa^2 + \cos \frac{\pi \ell}{L} \int_{\bar{x}}^{\bar{y}} \kappa^2. 
\end{align*} 
Since $r > 1$ and $Z(\bar{x},\bar{y},\bar{t}) = 0$, the curve $X(S^1,\bar{t})$ cannot have constant curvature, and thus
\begin{equation}
\int_{S^1} \kappa^2 > \frac{1}{L} \, \bigg ( \int_{S^1} \kappa \bigg )^2 = \frac{4\pi^2}{L}.
\end{equation} 
Similarly,
\begin{equation}
\int_{\bar{x}}^{\bar{y}} \kappa^2 \geq \frac{1}{\ell} \, \bigg ( \int_{\bar{x}}^{\bar{y}} \kappa \bigg )^2 = \frac{4\alpha^2}{\ell}.
\end{equation}
Putting everything together, we conclude that
\begin{align*} 
0 &\geq \frac{\partial Z}{\partial t}(\bar{x},\bar{y},\bar{t}) - \frac{\partial^2 Z}{\partial x^2}(\bar{x},\bar{y},\bar{t}) - \frac{\partial^2 Z}{\partial y^2}(\bar{x},\bar{y},\bar{t}) +2 \, \frac{\partial^2 Z}{\partial x \, \partial y}(\bar{x},\bar{y},\bar{t}) \\ 
&> \frac{4}{\ell} \, \Big ( \alpha^2 - \frac{\pi^2 \ell^2}{L^2} \Big )\cos\frac{\pi \ell}{L}. 
\end{align*} 
On the other hand, the inequality $r > 1$ implies $\cos \alpha \leq \cos \frac{\pi \ell}{L}$, hence $\alpha \geq \frac{\pi \ell}{L}$. This is a contradiction.
\end{proof}

\begin{remark}
By the monotonicity we have $R(t)\leq C$, where $C:=R(0)<\infty$ measures the quantitative embeddedness of the initial curve. Note also that $R$ is scaling invariant. In particular, this implies that the grim reaper and the paperclip cannot arise as a blowup limit of a curve shortening flow of closed embedded curves.
\end{remark}

\section{Grayson's convergence theorem}

In this final lecture we explain Huisken's proof of Grayson's theorem:

\begin{theorem}[{Grayson's theorem \cite{Grayson}}]\label{thm_grayson}
If $\Gamma\subset \mathbb{R}^2$ is a closed embedded curve, then the curve shortening flow $\{\Gamma_t\}_{t\in [0,T)}$ with $\Gamma_0=\Gamma$ exists until $T=\frac{A_\Gamma}{2\pi}$ and converges for $t\to T$ to a round point, i.e. there exists a unique point $x_0\in \mathbb{R}^2$ such that the rescaled flows
\begin{equation}
\Gamma_t^\lambda:= \lambda\cdot (\Gamma_{T+\lambda^{-2}t}-x_0)
\end{equation}
converge for $\lambda\to\infty$ to the round shrinking circle $\{\partial B_{\sqrt{-2t}}\}_{t\in (-\infty,0)}$.
\end{theorem}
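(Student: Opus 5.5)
We follow Huisken's approach: combine the distance comparison principle (Theorem \ref{thm_huisken_dist}) with blowup analysis, splitting into a type I and a type II case.

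\textbf{Step 1 (type I case).} If the singularity forms at the rate \eqref{typeone}, the Claim of Lecture 3 already applies: the rescalings $\Gamma^\lambda_t$ about a blowup point $x_0$ converge smoothly to the round shrinking circle $\{\partial B_{\sqrt{-2t}}\}$. In particular $\Gamma^\lambda_{-1}$ is, for $\lambda$ large, a smooth graph over the circle of radius $\sqrt2$ centered at the origin. Hence $\Gamma_t$ is convex and contained in a small neighborhood of $x_0$ for $t$ close to $T$, so the whole curve shrinks to $x_0$ and no other point can be a blowup point. Since the curve disappears at $T$ and $A(t)=A(0)-2\pi t$, the enclosed area must tend to $0$, which gives $T=A_\Gamma/2\pi$. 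Uniqueness of $x_0$ and full convergence as $\lambda\to\infty$ follow as in the Claim.

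\textbf{Step 2 (excluding type II).} This is the main obstacle. Suppose $\limsup_{t\to T}\sqrt{T-t}\max_{\Gamma_t}\abs{\kappa}=\infty$. We perform Hamilton's type II point selection: choose $(p_i,t_i)$ with $t_i\to T$ that nearly maximize $\abs{\kappa}^2(p,t)(T-\tfrac1i-t)$ over $t\le T-\tfrac1i$. Rescaling by $K_i=\abs{\kappa}(p_i,t_i)$ around $(p_i,t_i)$ produces flows with $\abs{\kappa}(0,0)=1$ and $\abs{\kappa}\le 1+o(1)$ on time intervals $[-a_i,b_i]$ with $a_i,b_i\to\infty$. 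By the derivative estimates (Theorem \ref{thm_der_est}) these converge smoothly to an eternal solution with $\abs{\kappa}\le1$ and $\abs{\kappa}(0,0)=1$, so $\abs{\kappa}$ attains its space-time maximum.

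We then need to show the limit is strictly convex. First, the ratio $R$ is scale invariant and nonincreasing, so the limit inherits $R\le R(0)$ for intrinsically close points; as the limit has length $L\to\infty$, this says $\ell/d\le C$. Second, an ancient flow with bounded curvature and $\ell/d\le C$ should be convex. One route is to use the total curvature $\int\abs{\kappa}\,ds$, which is nonincreasing; another is to use that inflection points can only disappear (Sturm/Angenent), so locally the blowup has finitely many sign changes and a blowup argument forces $\kappa\ge0$. Finally, the strong maximum principle applied to \eqref{eq_evol_curv} upgrades $\kappa\ge0$, $\kappa\not\equiv0$ to strict convexity. I expect this convexity step to be the most delicate part.

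Once strict convexity is established, the Harnack classification (Theorem on translating solitons) shows the limit is a translating soliton, namely a grim reaper. But a grim reaper has $\ell/d\to\infty$ for points far out on the two ends, contradicting the uniform bound from $R\le R(0)$ (cf. the Remark after Theorem \ref{thm_huisken_dist}). Hence type II is impossible and Step 1 applies.

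\textbf{Step 3 (conclusion).} Combining the two steps: the flow exists smoothly until its maximal time $T$. By Step 2 the singularity is type I, and by Step 1 the curve then converges to a round point at a unique $x_0$ with $T=A_\Gamma/2\pi$.
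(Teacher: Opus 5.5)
Your architecture matches the paper's: type II point selection, an eternal limit with $\abs{\kappa}\le 1$ attaining its maximum at $(0,0)$, Hamilton's Harnack rigidity forcing a grim reaper, exclusion of the grim reaper via the scale-invariant bound on $R$, and then the type I analysis of Lecture 3. Your Step 1 is fine. The genuine gap is exactly the step you flag: you never prove that the type II limit is convex, and neither route you sketch does it.

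The assertion that an ancient flow with bounded curvature and $\ell/d\le C$ ``should be convex'' is unproved. In the paper the bound on $R$ plays no role in convexity; it is used only at the end to exclude the grim reaper. Knowing only that $\int\abs{\kappa}\,ds$ is nonincreasing gives nothing in the limit. Sturmian zero counting by itself only says the number of inflection points is nonincreasing, so it does not exclude a limit whose inflection points are all transversal and persist for all time.

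The missing idea is Grayson's Lemma \ref{lemma_grayson}, the exact dissipation identity $\frac{d}{dt}\int_{\Gamma_t}\abs{\kappa}\,ds=-2\sum_{\kappa=0}\abs{\kappa_s}$. It is used as follows:
\begin{enumerate}
\item Integrating over $[0,T)$ gives $\int_0^T\sum_{\kappa=0}\abs{\kappa_s}\,dt\le\frac12\int_{\Gamma_0}\abs{\kappa}\,ds<\infty$.
\item This space-time quantity is invariant under parabolic rescaling, since $\kappa_s$ scales like $\lambda^{-2}$ and $dt$ like $\lambda^2$.
\item A fixed rescaled window $[-A,A]$ corresponds to original times in $[t_k-A\lambda_k^{-2},t_k+A\lambda_k^{-2}]$, which accumulate at $T$. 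So the dissipation of the rescaled flows on $[-A,A]$ is the tail of a convergent integral and tends to zero.
\item Transversal zeros of $\kappa$ persist under smooth convergence. Hence the limit satisfies $\int_{-\infty}^{\infty}\sum_{\kappa=0}\abs{\kappa_s}\,dt=0$, i.e.\ $\kappa_s=0$ wherever $\kappa=0$.
\item By the evolution equation $\kappa_t=\kappa_{ss}+\kappa^3$ and Angenent's Sturmian theory, degenerate zeros make the zero number drop strictly. They therefore cannot persist unless the flow is a line, which contradicts $\kappa(0,0)=1$.
\end{enumerate}
Hence the limit has no inflection points, $\kappa>0$, and the Harnack rigidity theorem applies. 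With this step inserted, the rest of your argument goes through as in the paper.
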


\begin{remark} Many different proofs of Grayson's theorem have been found by now. In particular, there is a nice geometric proof by Andrews-Bryan \cite{AndrewsBryan}, and a clever blowup argument by White \cite{Chodosh_lectures}.
\end{remark}

\begin{lemma}[Grayson]\label{lemma_grayson}
Along the curve shortening flow we have
\begin{equation}
\frac{d}{dt}\int_{\Gamma_t}\abs{\kappa}\, ds= -2\sum_{x:\kappa(x,t)=0}\abs{\kappa_s}(x,t)
\end{equation}
\end{lemma}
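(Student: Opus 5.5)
We compute the time derivative of $\int_{\Gamma_t}\abs{\kappa}\,ds$ by splitting the curve into the arcs on which $\kappa$ has a fixed sign. We first assume that at time $t$ the zeros of $\kappa$ are nondegenerate, i.e. $\kappa_s\neq 0$ wherever $\kappa=0$. Then there are finitely many such zeros, and by the implicit function theorem they move smoothly in $t$. On each arc $I_j(t)$ between consecutive zeros, $\abs{\kappa}=\sigma_j\kappa$ with a fixed sign $\sigma_j\in\{\pm1\}$.

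To differentiate, we work in a fixed parameter $x$ with $ds=\abs{\partial_x\gamma}\,dx$. From the computation leading to \eqref{eq_gradientflow} we know that $\partial_t\, ds=-\kappa^2\,ds$. Combining this with Proposition \ref{prop_evol_eq}, on each arc we obtain
\begin{equation*}
\frac{d}{dt}\int_{I_j(t)}\sigma_j\kappa\,ds=\int_{I_j(t)}\sigma_j(\kappa_{ss}+\kappa^3-\kappa^3)\,ds+\textrm{boundary terms}=\sigma_j\big[\kappa_s\big]_{\partial I_j}.
\end{equation*}
The boundary terms come from the motion of the endpoints, and they vanish because the integrand $\abs{\kappa}$ is zero at those endpoints. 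This is the key simplification: the $\kappa^3$ terms cancel exactly, so only the boundary contribution $\sigma_j[\kappa_s]_{\partial I_j}$ survives.

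Next we evaluate the signs. Suppose $\kappa>0$ on the arc, so $\sigma_j=1$. At the right endpoint $\kappa$ decreases to $0$, so $\kappa_s\le 0$ there; at the left endpoint $\kappa_s\ge 0$. Hence the arc contributes $-\abs{\kappa_s}$ at each of its endpoints, and the same holds when $\kappa<0$. Each zero is an endpoint of exactly two arcs, so summing over all arcs gives
\begin{equation*}
\frac{d}{dt}\int_{\Gamma_t}\abs{\kappa}\,ds=-2\sum_{\kappa=0}\abs{\kappa_s}.
\end{equation*}
If $\kappa$ has no zeros, the formula reduces to $\int\kappa_{ss}\,ds=0$, which is consistent with the sum being empty.

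The main obstacle is the presence of degenerate zeros, where $\kappa=\kappa_s=0$. To handle them, we invoke the Sturmian theory for the linear parabolic equation $\kappa_t=\kappa_{ss}+\kappa^3$ (Angenent): the number of zeros is finite for $t>0$, it is nonincreasing in $t$, and degenerate zeros occur only at a discrete set of times. Away from those times the computation above applies. At the exceptional times, $\int\abs{\kappa}\,ds$ is still continuous, so the identity holds in the sense of an a.e. derivative of a Lipschitz function. The sum is unaffected by degenerate zeros, since they contribute $\abs{\kappa_s}=0$. Alternatively, one can regularize $\abs{\kappa}$ by $\sqrt{\kappa^2+\eps^2}$, compute
\begin{equation*}
\frac{d}{dt}\int\sqrt{\kappa^2+\eps^2}\,ds=-\int\frac{\eps^2\kappa_s^2}{(\kappa^2+\eps^2)^{3/2}}\,ds+\int\frac{\eps^2\kappa^2}{\sqrt{\kappa^2+\eps^2}}\,ds-\int\eps^2\kappa^2\ldots,
\end{equation*}
and check that as $\eps\to0$ the first term concentrates at the zeros to $-2\sum\abs{\kappa_s}$, since $\int\eps^2(\kappa_s^2 s^2+\eps^2)^{-3/2}\kappa_s^2\,ds\to2\abs{\kappa_s}$ near a nondegenerate zero. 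I would present the piecewise argument as the main proof and mention Sturmian theory for the degenerate case.
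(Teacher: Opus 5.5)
Your proposal is correct and is essentially the paper's argument: split $\Gamma_t$ into arcs where $\kappa$ has a fixed sign, let the $\kappa^3$ from $\kappa_t=\kappa_{ss}+\kappa^3$ cancel against $\partial_t\,ds=-\kappa^2\,ds$, and integrate $\kappa_{ss}$ by parts so that each zero contributes $-\abs{\kappa_s}$ from each of its two adjacent arcs. The paper only invokes analyticity to get finitely many inflection points, so your Sturmian treatment of degenerate zeros fills a gap the paper glosses over. The one slip is in the regularization aside: the correct formula is $-\int \eps^2\kappa_s^2(\kappa^2+\eps^2)^{-3/2}\,ds-\int \eps^2\kappa^2(\kappa^2+\eps^2)^{-1/2}\,ds$, and since the second term is $O(\eps)$ this does not affect anything.
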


\begin{proof}[Proof of Lemma \ref{lemma_grayson}]
Since solutions of the curve shortening flow are analytic, there are only finitely many inflection points. We compute
\begin{equation}
\frac{d}{dt}\left(\int_{\{\kappa\geq 0\}} \kappa\, ds-\int_{\{\kappa\leq 0\}} \kappa\, ds\right)=\int_{\{\kappa\geq 0\}} \kappa_{ss}\, ds-\int_{\{\kappa\leq 0\}} \kappa_{ss}\, ds.
\end{equation}
Integrating by parts, the assertion follows.
\end{proof}

\begin{proof}[Proof of Theorem \ref{thm_grayson}]
Let $T<\infty$ be the maximal existence time of the curve shortening flow starting at $\Gamma$. Suppose towards a contradiction that
\begin{equation}\label{typetwo}
\limsup_{t\to T}\left((T-t)\max_{\Gamma_t}\kappa^2\right)=\infty.
\end{equation} 
We now perform a type II blowup as in \cite{huisken-sinestrari1}, see also \cite{Altschuler}. For any integer $k\geq 1/T$ let $t_k\in [0,T-1/k]$, $x_k\in S^1$ be such that
\begin{equation}
\kappa^2(x_k,t_k)(T-1/k-t_k)=\max_{t\leq T-1/k, \, x\in S^1} \kappa^2(x,t)(T-1/k-t).
\end{equation}
Furthermore we set
\begin{equation}
\lambda_k = \kappa(x_k,t_k),\quad t_k^{(0)}=-\lambda_k^2 t_k,\quad  t_k^{(1)}=\lambda_k^2(T-1/k-t_k).
\end{equation}
By \eqref{typetwo}, given any $M<\infty$ there exist $\bar{t}<T$ and $\bar{x}\in S^1$ such that $\kappa^2(\bar x,\bar t)(T-\bar t)>2M$. For $k$ large enough we have
\begin{equation}
\bar t < T - 1/k,\quad \kappa^2(\bar x,\bar t)(T-\bar t -1/k) > M.
\end{equation}
It follows that
\begin{equation}
t_k^{(1)}= \kappa^2(x_k,t_k)(T-1/k-t_k)\geq \kappa^2(\bar x,\bar t)(T-1/k-\bar t)> M.
\end{equation}
Since $t_k^{(1)}$ is increasing and $M$ is arbitrary, this implies $t_k^{(1)}\to\infty$. It follows that $\lambda_k\to\infty$, $t_k\to T$ and $t_k^{(0)}\to -\infty$.

We now consider the sequence of rescaled flows
\begin{equation}
\Gamma^k_t= \lambda_k\cdot \left(\Gamma_{t_k+\lambda_k^{-2}t}-x_k\right),\qquad t\in [t_k^{(0)},t_k^{(1)}).
\end{equation}
By construction, after choosing a suitable orientation, $\Gamma^k_t$ has $\kappa_k=1$ at $t=0$ at the origin. In addition, our choice of $(x_k,t_k)$ implies
\begin{equation}
\kappa^2_k(x,t)\leq \frac{T-1/k-t_k}{T-1/k-t_k-\lambda_k^2 t}=\frac{t_k^{(1)}}{t_k^{(1)}-t},\qquad t\in [t_k^{(0)},t_k^{(1)}).
\end{equation}
After passing to a subsequence, we thus get a smooth limit $\{\Gamma_t^\infty\}_{t\in (-\infty,\infty)}$. The limit satisfies $\kappa=1$ at time 0 at the origin, and $\kappa^2\leq 1$ at every point and every time. Moreover, by Lemma \ref{lemma_grayson} the limit satisfies
\begin{equation}
\int_{-\infty}^\infty \sum_{x:\kappa(x,t)=0}\abs{\kappa_s}(x,t)\, dt=0,
\end{equation}
i.e. if there was any point with $\kappa=0$ then we would have $\kappa_s=0$ at this point also. Using the evolution equations and analyticity this would imply that $\{\Gamma_t^\infty\}_{t\in (-\infty,\infty)}$ is a straight line \cite{Angenent}; a contradiction. Thus $\kappa>0$, and by the equality case of Hamilton's Harnack inequality from Lecture 4 and the classification of translating solitons from the homework $\{\Gamma_t^\infty\}_{t\in (-\infty,\infty)}$ must be a grim reaper; this contradicts the bound for the ratio between the intrinsic and extrinsic distance from Lecture 5.
We thus have proved the type I blowup rate bound
\begin{equation}
\limsup_{t\to T}\left((T-t)\max_{\Gamma_t}\kappa^2\right)<\infty.
\end{equation}
and by the results from Lecture 3 we conclude that $T=\frac{A_\Gamma}{2\pi}$ and that for $t\to T$ the flow converges to a round point.
\end{proof}

\vspace{10mm}

\bibliography{lectures_csf}

\bibliographystyle{alpha}

\vspace{10mm}

{\sc Department of Mathematics, University of Toronto,  40 St George Street, Toronto, ON M5S 2E4, Canada}\\

\end{document}